\newtheorem{theorem}{Theorem}
\newtheorem{lemma}[theorem]{Lemma}
\newtheorem{corollary}[theorem]{Corollary}
\newtheorem{proposition}[theorem]{Proposition}
\theoremstyle{definition}
\newtheorem{definition}[theorem]{Definition}
\newcommand{\remove}[1]{}
\newcommand{\R}{\mathbb{R}}
\newcommand{\Rn}{\R^n}
\newcommand{\Q}{\mathbb{Q}}
\newcommand{\Qn}{\Q^n}
\newcommand{\Zed}{\mathbb{Z}}
\newcommand{\Zn}{\Zed^n}
\newcommand{\GLn}{\mathsf{GL}(n,\Zed)}
\newcommand{\GLe}{\mathsf{GL}(e,\Zed)}
\newcommand{\SLn}{\mathsf{SL}(n,\Zed)}
\DeclareMathOperator{\aff}{\mathrm{aff}}
\DeclareMathOperator{\conv}{\mathrm{conv}} 
\DeclareMathOperator{\den}{\mathrm{den}}
\DeclareMathOperator{\rank}{\mathrm{rank}}
\DeclareMathOperator{\orb}{\rm orb}
 \title[$\GLn$-orbits]
{Classifying   $\GLn$-orbits of points and rational subspaces}
\author{Leonardo Manuel Cabrer and Daniele Mundici }
\address[L.M. Cabrer]{
Institute of Computer Languages 
Theory and Logic Group\\ 
Technische Universit\"at Wien\\
Favoritenstrasse 9-11\\ A-1040 Vienna \\ Austria}
\email{lmcabrer@yahoo.com.ar}
\address[D. Mundici]{Department of
Mathematics and Computer Science  ``Ulisse Dini'' \\
University of Florence\\
Viale Morgagni 67/a \\
I-50134 Florence \\
Italy}
\email{ mundici@math.unifi.it }
\begin{document}

\thanks{2000 {\it Mathematics Subject Classification.}
Primary:   37C85.  Secondary: 11B57, 22F05, 37A45}  
\keywords{Orbit, $\GLn$-orbit, $\SLn$-orbit, dense orbit, discrete orbit,
complete invariant,  rational polyhedron, rational simplex,  
(Farey) regular simplex. 
}

\begin{abstract}
We first show
 that the subgroup of the abelian real group $\R$ generated by the coordinates of a point in $x\in\Rn$ completely classifies the $\GLn$-orbit of $x$.
This yields a short proof of J.S.Dani's theorem: 
the $\GLn$-orbit of  $x\in\Rn$ 
is dense  iff $x_i/x_j\in \R\setminus \Q$ for some
$i,j=1,\dots,n$.  We then classify $\GLn$-orbits of
rational affine subspaces $F$ of $\R^n$.
We prove that the dimension of $F$ together with the volume of a special parallelotope associated to  $F$
yields  a complete classifier of the $\GLn$-orbit of $F$.
\end{abstract}

\maketitle

%%%%%%%%%%%%%%%%%%%%%%%%%%%%%
\section{Introduction} 
%%%%%%%%%%%%%%%%%%%%%%%%%%%%%
Throughout we   let
$\GLn$ 
denote the group of linear  transformations of the form
$
x\mapsto \mathcal{U}x \,\, \mbox{for} \,\, x\in \Rn,
$
 where   $\mathcal U$ is an integer  $(n \times n)$-matrix 
with
 $\det(\mathcal U)=\pm 1$.   
We let 
$
\orb(x)=\{\gamma(x)\in \Rn\mid
\gamma\in \GLn\}
$
 denote the
{\it $\GLn$-orbit} of $x\in \Rn$.

For all $n=1,2,\dots$ 
we  classify
$\GLn$-orbits of points in  $\Rn$:   
every point in  $\Rn$ is assigned an invariant,
in such a way that two points have the same $\GLn$-orbit iff
they have the same invariant.
For each $x=(x_1,\ldots,x_n)\in \Rn$,
the  invariant   is given by the group
$$
H_x= \Zed x_1+\Zed x_2+\dots+\Zed x_n
$$
generated by $x_1,x_2,\ldots,x_n$ in the
 additive group
$\R$.
 As a first application we give a short 
 self-contained proof of  the characterization
 \cite[Theorem 17]{dan}
 of those  $x\in \Rn$ having a dense  $\GLn$- and
 $\SLn$-orbit. Our proof is also shorter and more elementary
  than the one given in  \cite[Corollary 3.1]{short}.
 
A {\it rational affine hyperplane} $H \subseteq \Rn$
is  a set of the form 
$ 
H =\{z \in \Rn   \mid  \langle h, z\rangle = r\},
$ 
    for some nonzero vector
     $h\in \Qn$ and $r\in \Q$.
     Here $\langle \mbox{-},\mbox{-}\rangle$ denotes scalar product.
When $r=0$ we say that $H$ is a
{\it rational linear hyperplane}.
   A {\it rational affine  (resp., linear) subspace} $A$ of  $\Rn$
is  an  intersection
of rational affine (resp., linear) hyperplanes in $\Rn$.
 
In Theorem~\ref{Theorem:ClassAffineSpace}
we classify  $\GLn$-orbits of rational 
affine subspaces of $\Rn$.

With respect to the vast literature on
orbits of discrete groups acting on $\Rn$
\cite{dan, gui, launog, nog2002, nog2010, wit},
our results highlight the crucial role of (Farey)
regularity of simplicial complexes, i.e., the 
regularity (=nonsingularity=smoothness) of their associated
 fans and toric varieties \cite{ewa,oda}.
Regular simplicial complexes
 were also used in   \cite{cabmun}
to classify  $\GLn\ltimes\Zed^n$-orbits of points.

%%%%%%%%%%%%%%% 
 \section{Classification of $\GLn$-orbits of points and a
 short proof of J.S.Dani's Theorem}
 \label{section:elementary}
 %%%%%%%%%%%%%%
We first show that the  group $H_x $  classifies the
 $\GLn$-orbit of $x\in\Rn$.
 \begin{proposition}
 \label{proposition:complete}
 For all $x,y\in\Rn,\,\,\,$  $H_y=H_x$\,\,\, iff \,\,\,$y=\gamma(x)$
for some $\gamma\in\GLn$. 
 \end{proposition}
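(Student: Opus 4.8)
The plan is to treat the two implications separately. The forward direction is immediate: if $y=\gamma(x)$ with $\gamma$ given by an integer matrix $\mathcal U$, $\det\mathcal U=\pm1$, then each $y_i$ is a $\Zed$-linear combination of $x_1,\dots,x_n$, whence $H_y\subseteq H_x$; applying the same observation to $\mathcal U^{-1}$ (again integral, determinant $\pm1$) gives $H_x\subseteq H_y$, so $H_x=H_y$.

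For the converse the substance is a normal-form lemma: \emph{if $v\in\Rn$ and $H_v$ has rank $r$, then there is $\gamma\in\GLn$ with $\gamma(v)=(c_1,\dots,c_r,0,\dots,0)$ for some $\Zed$-basis $c_1,\dots,c_r$ of $H_v$.} Note first that $H_v$ is a finitely generated torsion-free abelian group, hence free of some rank $r\le n$ (the case $r=0$, i.e.\ $v=0$, being trivial). To prove the lemma, fix any $\Zed$-basis $b_1,\dots,b_r$ of $H_v$ and write $v_i=\sum_{k}A_{ik}b_k$ with $A=(A_{ik})$ an integer $n\times r$ matrix, i.e.\ $v=Ab$ in column-vector notation. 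The key point is that the rows of $A$ generate all of $\Zed^r$: under the coordinate isomorphism $H_v\cong\Zed^r$ carrying $b_k$ to the $k$-th unit vector, the $i$-th row of $A$ is the image of $v_i$, so the subgroup spanned by the rows is the image of $H_v=\Zed v_1+\dots+\Zed v_n$, namely all of $\Zed^r$. Now invoke the Smith normal form: $A=\mathcal P\left(\begin{smallmatrix}D\\0\end{smallmatrix}\right)\mathcal Q$ with $\mathcal P\in\GLn$, $\mathcal Q\in\mathsf{GL}(r,\Zed)$, and $D=\mathrm{diag}(d_1,\dots,d_r)$, $1\le d_1\mid\cdots\mid d_r$ (the block $0$ being $(n-r)\times r$). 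Left multiplication by $\mathcal P$ leaves the row span unchanged, and right multiplication by $\mathcal Q$ sends it to its image under the automorphism $\rho\mapsto\rho\mathcal Q$ of $\Zed^r$, so the row span of $A$ has index $d_1\cdots d_r$ in $\Zed^r$; being everything forces all $d_i=1$, i.e.\ $D=I_r$. Hence $v=\mathcal P\left(\begin{smallmatrix}I_r\\0\end{smallmatrix}\right)(\mathcal Qb)$, and $\gamma:=\mathcal P^{-1}\in\GLn$ sends $v$ to $(c_1,\dots,c_r,0,\dots,0)$ where $c:=\mathcal Qb$ is again a $\Zed$-basis of $H_v$ (since $\mathcal Q\in\mathsf{GL}(r,\Zed)$).

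Finally I would assemble the converse. Given $H_x=H_y$, this common group is free of some rank $r$; the lemma yields $\gamma_1\in\GLn$ with $\gamma_1(x)=(c_1,\dots,c_r,0,\dots,0)$ for a basis $c$ of $H_x$, and $\gamma_2\in\GLn$ with $\gamma_2(y)=(c'_1,\dots,c'_r,0,\dots,0)$ for a basis $c'$ of $H_y=H_x$. Two $\Zed$-bases of the same free abelian group are related by a matrix $\mathcal W\in\mathsf{GL}(r,\Zed)$, say $c'=\mathcal Wc$; the block-diagonal matrix $\mathcal W\oplus I_{n-r}\in\GLn$ then sends $\gamma_1(x)$ to $\gamma_2(y)$, so $y=\gamma_2^{-1}\,(\mathcal W\oplus I_{n-r})\,\gamma_1(x)$ exhibits the desired element of $\GLn$. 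I expect the only genuine obstacle to be the normal-form lemma — and within it the observation that the coordinate matrix $A$ has rows spanning all of $\Zed^r$; the remainder is routine manipulation of Smith normal forms and block matrices.
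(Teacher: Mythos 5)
Your argument is correct. The easy direction matches the paper's. For the converse, you and the paper both aim at the same normal form: find $\gamma\in\GLn$ with $\gamma(x)=(c_1,\dots,c_r,0,\dots,0)$, where $c_1,\dots,c_r$ is a $\Zed$-basis of $H_x$, do the same for $y$, and then glue via a block matrix $\mathcal W\oplus I_{n-r}$ with $\mathcal W\in\mathsf{GL}(r,\Zed)$ taking one basis of $H_x=H_y$ to the other. Where you differ is in how you reach the normal form. The paper works geometrically: it introduces $L_x$, the smallest rational linear subspace of $\Rn$ containing $x$, uses the fact that $L_x\cap\Zn$ is a direct summand of $\Zn$ (so a $\Zed$-basis of $\Zn$ can be chosen with its first $e=\dim L_x$ members in $L_x$), and then an easy minimality argument shows that the first $e$ coordinates of the image of $x$ give a basis of $H_x$. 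You instead work algebraically: expressing $x=Ab$ for a fixed basis $b$ of $H_x$ and an integer $n\times r$ coefficient matrix $A$, you observe that the rows of $A$ generate $\Zed^r$ and then apply Smith normal form, reading off $D=I_r$ from the index computation. Both are standard manipulations of lattices, but your version makes explicit (via Smith) a step the paper leaves implicit (the existence of the complementary basis), at the cost of a slightly longer setup. Either version is fine; there is no gap in your proposal.
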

 
 \begin{proof}
The $(\Leftarrow)$-direction is trivial. For the converse $(\Rightarrow)$-direction, 
let   
$L_x= \bigcap\{
 L\subseteq\Rn\mid x\in L
 $
 $ \mbox{and $L$ is a rational linear
 subspace of $\Rn$}\}$. 
 Let $e=\dim(L_x)$. Let $v_1,\dots,v_n$ be a basis of the
 free abelian group  $\Zn\subseteq \Rn$, with 
 $v_1,\dots,v_e\in L_x$.  Let  the map $\alpha\in \GLn$ send each
 $v_i$ to the standard $i$th basis vector $\epsilon_i$ of the vector space
 $\Rn$. Let $x'=\alpha(x)$. Then  $x'\in \alpha(L_x)=\{r \in \Rn\mid
r_{e+1}=\dots=r_n=0\}$.  
It follows that $e\geq\rank(H_x)$.  The assumed minimality of
$L_x$ yields $e\leq\rank(H_x),$ whence
 $e=\rank(H_x)$. 
 Trivially,  $H_{x'}=H_x,$ and
$\rank(H_{x'})=e$. The first $e$ coordinates  
$x'_1,\dots,x'_e$  of $x'$ are a basis of  the group $H_{x'}$.

Now, suppose  $y=(y_1,\dots,y_n)\in \Rn$ satisfies $H_y=H_x$, with the intent 
of showing that  $y\in\orb(x)$.  Then $e=\dim(L_x)
=\rank(H_x)=\rank(H_y)=\dim(L_y)$. As above, let
 $w_1,\dots,w_n$ be a basis of $\Zn$, with 
 $w_1,\dots,w_e\in L_y$.  Let  $\beta\in \GLn$ send
 each $w_i $ to $\epsilon_i$. Let $y'=\beta(y)$.  
 The first $e$ coordinates  
$y'_1,\dots,y'_e$  of $y'$ are a basis of  the group $H_{y'}$.
There is  $\delta\in \GLe$ mapping  $x'_j$ to $y'_j$  for each
$j=1,\dots,e$. 
By appending a diagonal of $n-e$ ones to the matrix of $\delta$,
we obtain the matrix of a map  $\hat\delta\in \GLn$ sending
$x'=(x'_1,\dots,x'_e,0\dots,0)$ to
$y'=(y'_1,\dots,y'_e,0\dots,0)$. The composite map
$\gamma=\beta^{-1}\hat\delta\alpha\in\GLn$ satisfies  $\gamma(x)=y$.
 \end{proof}
 \begin{proposition}
 \label{proposition:dense} For any $x\in\Rn$ let
  $e=\rank(H_x)$ and
 $\,\mathcal B_x$ be the set of all (ordered) bases
 $  b=(b_1,\dots,b_e)$  of $H_x$.  
 Suppose  $\emptyset = \R x\cap \Zed^n$,
 where  $ \R x = \{\rho x\mid \rho \in \R\} $.
  Then:
 \begin{itemize}
 \item[(a)] The origin  $0\in\R^e$ is an accumulation point of the set  $\mathcal B_x$,
 identified  with
 a subset of $\R^e$.
  \item[(b)] More generally, for each  $i=1,\dots,e$,  every
   point on the $i$th axis  $\R \epsilon_i$ of  $\R^e$
  is an accumulation point of $\mathcal B_x$.
   \item[(c)] $\mathcal B_x$ is dense in $\R^e$.
   %%%
    \item[(d)]  Both $\orb(x)$ and the $\SLn$-orbit of $x$ are dense in $\Rn$. 
    %%%
    \end{itemize}
 \end{proposition}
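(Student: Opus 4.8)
The plan is to establish (a), deduce (b) from it, and then obtain (c) and (d) almost for free by a homogeneity argument. I would first record two preliminaries. The hypothesis — which I read as saying that the line $\R x$ meets $\Zed^n$ only at $0$, so in particular $x\neq 0$ — is equivalent to $e=\rank(H_x)\geq 2$: if $e\leq 1$ then $x$ is a real scalar multiple of a primitive integer vector, and conversely. Hence, writing $H=H_x$, the group $H$ has rank $\geq 2$ and so is dense in $\R$ (it contains $\Zed a+\Zed b$ with $a/b$ irrational). Secondly, $\mathcal B_x$, the set of ordered bases of the free abelian group $H$, is a single $\GLe$-orbit in $\R^e$: fixing one basis, every ordered basis is obtained from it by a unique matrix of $\GLe$. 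In particular $\mathcal B_x$ is invariant under the linear action of $\GLe$ and under permuting coordinates, and no member of $\mathcal B_x$ has a zero coordinate (that would leave $e-1$ elements generating the rank-$e$ group $H$).

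For (a), given $\varepsilon>0$ I would use density of $H$ to pick $h\in H$ with $0<|h|<\varepsilon$, divide $h$ by the gcd of its coordinates in a fixed basis to obtain $c_1\in H$ with $0<|c_1|\leq|h|$ and primitive coordinate vector, and extend $c_1$ to an ordered basis $(c_1,b_2,\dots,b_e)$ of $H$. Then for each $j\geq 2$ I would choose $k_j\in\Zed$ with $|b_j+k_jc_1|<|c_1|$ — possible since $b_j+\Zed c_1$ is $|c_1|$-dense in $\R$ — and replace $b_j$ by $b_j+k_jc_1$; this is an elementary change of basis, so the new tuple still lies in $\mathcal B_x$ and all its coordinates now have absolute value $<\varepsilon$. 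Since $0\notin\mathcal B_x$ and $\varepsilon$ was arbitrary, $0$ is an accumulation point of $\mathcal B_x$. For (b), by the coordinate symmetry of $\mathcal B_x$ it suffices to show that $(t,0,\dots,0)$ is an accumulation point for each $t\in\R$; starting from a basis $(c_1,\dots,c_e)$ with all $|c_j|<\delta$ (from (a)), I would set $b_1:=pc_1+c_2$ with $p\in\Zed$ chosen so that $|b_1-t|<|c_1|<\delta$. The tuple $(b_1,c_1,c_3,\dots,c_e)$ is again an ordered basis of $H$ (the change of basis being an integer matrix of determinant $-1$), it lies within $\delta\sqrt{e}$ of $(t,0,\dots,0)$, and $(t,0,\dots,0)\notin\mathcal B_x$; letting $\delta\to 0$ completes (b).

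For (c): by (b) the set of accumulation points of $\mathcal B_x$ contains every coordinate axis $\R\epsilon_i$ of $\R^e$. Since $\mathcal B_x$ is $\GLe$-invariant and each $g\in\GLe$ is a homeomorphism of $\R^e$, this accumulation set is $\GLe$-invariant, hence contains $\GLe\cdot\bigcup_i\R\epsilon_i=\bigcup\{\R v\mid v\in\Zed^e\setminus\{0\}\}$, the union of all rational lines through the origin, which is dense in $\R^e$; so $\mathcal B_x$ is dense. For (d) I would use the reduction in the proof of Proposition~\ref{proposition:complete}: $\orb(x)=\orb(x')$ for $x'=(x_1',\dots,x_e',0,\dots,0)\in\Rn$ with $(x_1',\dots,x_e')$ a basis of $H$, and, $\SLn$ being normal in $\GLn$, the $\SLn$-orbit of $x$ is dense iff that of $x'$ is. Embedding $\GLe$ into $\GLn$ by $M\mapsto\mathrm{diag}(M,I_{n-e})$ gives $\orb(x')\supseteq\mathcal B_x\times\{0\}^{n-e}$, which by (b) accumulates at the second coordinate axis $\R\epsilon_2$ of $\Rn$ (here $e\geq 2$, so $n\geq 2$). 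Writing $\GLn=\SLn\sqcup\SLn\tau$ with $\tau=\mathrm{diag}(-1,1,\dots,1)$, one has that $\orb(x')$ is the $\SLn$-orbit of $x'$ together with its image under $\tau$; since $\tau$ fixes $\R\epsilon_2$ pointwise, the $\SLn$-orbit of $x'$ must itself accumulate at every point of $\R\epsilon_2$. As that orbit is $\SLn$-invariant and $\SLn\cdot\R\epsilon_2$ is again the union of all rational lines through $0$ in $\Rn$ (the second columns of matrices in $\SLn$ exhaust the primitive vectors of $\Zed^n$), it accumulates at a dense set and is therefore dense; hence so are the $\SLn$-orbit of $x$ and, a fortiori, $\orb(x)$.

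I expect the only genuinely delicate step to be (a): manufacturing a whole basis of $H$ out of small elements. The key device is to take a single small primitive element and then reduce an otherwise arbitrary basis modulo it; once (a) is available, (b) is a minor variant, and (c) and (d) follow softly from the fact that $\mathcal B_x$ and the orbits in question are invariant under homeomorphism groups whose orbits of the coordinate axes are dense, so that accumulating along one axis forces density. The single extra subtlety in (d) is that $\SLn$ has index $2$ in $\GLn$; this is handled by working with an axis $\R\epsilon_i$, $i\geq 2$, that the reflection $\tau$ fixes pointwise — available precisely because $n\geq 2$.
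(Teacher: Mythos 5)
Your proof is correct and follows the same overall architecture as the paper's (accumulate at $0$, then along each axis, then use the group action to get density, then transfer from $\R^e$ to $\Rn$), but with two refinements worth noting. In part (a), instead of the paper's inductive ``Euclidean algorithm'' shrinking of pairs $g_{it},g_{jt}$, you take a single small $h\in H$, divide out the gcd of its coordinates to get a small primitive $c_1$, complete $c_1$ to a basis, and then reduce every other basis vector modulo $c_1$ in one pass; this is cleaner and more self-contained than the paper's sketch. In part (d), the paper passes from density of the $\GLn$-orbit (in $\R^e$, and then in each rational $e$-dimensional subspace $L=\eta(\R^e)$) to density of the $\SLn$-orbit rather quickly; when $n=e$ the restriction of $\orb(x)$ to $\R^e$ is $\mathcal B_x$ itself, and the $\SLn$-orbit is only an index-$2$ subset, so it is not immediate that it inherits density. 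Your argument addresses this explicitly: you note that $\tau=\mathrm{diag}(-1,1,\ldots,1)$ fixes $\R\epsilon_2$ pointwise, so the $\SLn$-orbit must accumulate along $\R\epsilon_2$ whenever $\orb(x)$ does, and then you use $\SLn$-invariance together with the density of rational lines through the origin. This fills a small gap in the paper's treatment of the $\SLn$ case while reaching the same conclusion.
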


 \begin{proof}  By assumption $n\geq\rank(H_x)\geq 2$.
 Further, for no
 integer vector $p\in\Zn$ the point  $x$ belongs to
  the set  $\R_{\geq 0}p=
 \{\rho p\mid0\leq \rho\in \R\}$.  

(a) Pick
 $  g=(g_1,\dots,g_e)\in\mathcal B_x$. Fix  $i\in \{1,\dots,e\}$.
 Our assumption about $x$ yields an index 
 $j=j_i$ with  $g_i/g_j\notin \Q$.
Then the
  euclidean algoritm yields a
  sequence  $g_{it}, g_{jt}$ of real numbers such that  $\lim_{t\to\infty} g_{it}
  =0=\lim_{t\to\infty} g_{jt},$   and for each  $t$ the elements  $g_{it},g_{jt}$
  together with the remaining $g_k  \,\,\,(k\not= i,j)$ of $  g$ form
  a basis of  $H_x$. Proceeding inductively on $i$ 
  we obtain a sequence of ordered bases  $  g_t$ of $H_x$ such that
  $\lim_{t\to \infty}   g_t=(0,0,\dots,0)\in \R^e$.
  
  (b) It suffices to argue for $i=1$.  There is  $j=2,\dots,e$  such that  $g_1/g_j\notin
  \Q$, say  $j=2$ without loss of generality. Part (a) yields a sequence of bases
  $  g_t=(g_{1t},\dots,g_{et})$ converging to the origin of $\R^e$.
  Let $z$ be an arbitrary point on the first axis  $\R \epsilon_1$  of $\R^e$.
  For some  $\xi\in \R$ we may write  $z=\xi \epsilon_1$.  Let  $B_\epsilon(z)
  \subseteq \R^e$ be
  the open ball of radius  $\epsilon>0$ cantered at $z$. Then for all suitably
  large  $t$ there is  an integer  $k_t$  such that the point
  $(k_tg_{2t}+g_{1t}, g_{2t},\dots, g_{et})$ belongs to $B_\epsilon(z)$. 
  
  (c)  Fix  $u \in \R^e$  and  $\epsilon>0$.
   Again let  $B_\epsilon(u)
  \subseteq \R^e$ be
  the open ball of radius  $\epsilon>0$ centered at $u$.  In view of (a)
it suffices to argue in case  $u\not=0$.
Then there is a vector  $q\in \Zed^e$
and a point $z $  lying in  $\R_{>0}q\cap B_\epsilon(u)$.
(The intersection of the sphere S$^{e-1}\subseteq \R^e$ 
with the set of rational halflines in $\R^e$ with vertex at the origin, is a dense subset of S$^{e-1}$). 
We may insist that $q$ is {\it primitive} i.e., the gcd of its coordinates is 1.  Since  $q$ can be extended to a basis of the free abelian
group $\Zn$ then some    $\delta\in \GLe$
maps  $q$ to  the first basis vector 
  $\epsilon_1$ of the vector space $\R^e$.
For some  $0 < \omega\in \R$ we can write  $\delta(z)=\omega \epsilon_1$.
As proved in  (b),  $\omega \epsilon_1$ is an accumulation point of the set $\mathcal B_x$ of
bases $  b=(b_1,\dots,b_e)$ of $H_x$. Since $\delta^{-1},$ as well as $\delta,$
is continuous and one-one,  $z$ is an accumulation point of the 
set $\Delta$ of $\delta^{-1}$-images
of these bases. Thus some basis  $(h_1,\dots,h_e)$  of  $H_x$ 
belongs to  $B_\epsilon(u)$. 

(d)  
Let $h=(h_1,\dots,h_e,0,\dots,0)\in\Rn$. Then  $H_h=H_x$.
By Proposition~\ref{proposition:complete},  $h\in \orb(x)$.
Since $e\geq 2$ (by possibly exchanging $h_1$ and $h_2$) we may insist that there exists $\gamma\in\SLn$ such that $\gamma(x)=h$. 
Identifying  $\R^e$  with the subspace
of  $\Rn$ of points whose last $n-e$ coordinates are zero,
we see that $\orb(x)$ is dense in  $\R^e$.
Let $L$ be an arbitrary
rational $e$-dimensional
 linear subspace  $L$ of  $\Rn$. 
Since $e\geq 2$, \ 
  $L$ is the  $\eta$-image of $\R^e$ for some
 $\eta\in \SLn\subseteq\GLn$.  Thus  
 $\orb(x)$ is dense in  $L$ as well. 
We conclude that  $\orb(x)$ and the $\SLn$-orbit of $x$ are dense in $\Rn$. 
 \end{proof}
 
 \begin{corollary}
{\rm \cite[Theorem 17]{dan}}
\label{corollary:cortissima}
The  $\GLn$-orbit (equivalently, the $\SLn$-orbit)
of $x\in \Rn$ is dense in $\Rn$\,  iff\,\,
$\R x \cap \Zed^n=\emptyset$
\,\,iff\,\, for no\,  $p\in\Zn$, $x\in \R_{\geq 0}p$. 

\end{corollary}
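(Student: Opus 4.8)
The plan is to regard Corollary~\ref{corollary:cortissima} as a short repackaging of Proposition~\ref{proposition:dense}(d) together with an easy converse, so that essentially no new work is needed. Abbreviate by $(\ast)$ the condition ``$\R x\cap\Zed^n=\emptyset$'' (which for $x\neq0$ just says that the line $\R x$ contains no nonzero integer vector) and by $(\ast\ast)$ the condition ``$x\notin\R_{\geq0}p$ for every $p\in\Zn$''. I must show that $\orb(x)$ being dense, the $\SLn$-orbit of $x$ being dense, $(\ast)$, and $(\ast\ast)$ are all equivalent.

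First I would settle $(\ast)\iff(\ast\ast)$ by an elementary scaling argument. If $x\in\R_{\geq0}p$ with $p\in\Zn$, then either $x=0$ (and then $\R x=\{0\}$ trivially meets $\Zed^n$), or $x=\rho q$ with $\rho>0$ and $q\in\Zn$ obtained from $p$ by dividing out the gcd of its coordinates, whence $\rho^{-1}x=q$ is a nonzero integer point on $\R x$; so $(\ast)$ fails. Conversely, a nonzero integer point $q$ on $\R x$ writes $x=\lambda q$ with $\lambda\neq0$, hence $x\in\R_{\geq0}q$ or $x\in\R_{\geq0}(-q)$, so $(\ast\ast)$ fails. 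The degenerate vector $x=0$ is handled separately: there $\orb(x)=\{0\}$ is non-dense, $(\ast)$ fails, and $(\ast\ast)$ fails, so the statement is consistent.

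The implication ``$(\ast)\Rightarrow$ density'' requires nothing new: under hypothesis $(\ast)$ it is exactly Proposition~\ref{proposition:dense}(d), which asserts that $\orb(x)$ \emph{and} the $\SLn$-orbit of $x$ are dense in $\Rn$. For the converse I would argue contrapositively from $(\ast\ast)$: if $x=\rho p$ with $0\le\rho\in\R$ and $p\in\Zn$, then for every $\gamma\in\GLn$ the image $\gamma(p)$ again lies in $\Zn$, so $\gamma(x)=\rho\,\gamma(p)\in\rho\,\Zn$; hence $\orb(x)\subseteq\rho\,\Zn$ and a fortiori the $\SLn$-orbit of $x$ is contained in $\rho\,\Zn$. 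Since $\rho\,\Zn$ is either $\{0\}$ or a discrete, hence closed and proper, subgroup of $\Rn$, its closure is not all of $\Rn$, so neither orbit is dense. Assembling these implications together with the inclusion $\SLn\subseteq\GLn$ yields all the stated equivalences.

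I do not anticipate a genuine obstacle: the whole arithmetic and dynamical weight is carried by Proposition~\ref{proposition:dense}. The only mild care needed is the bookkeeping for the trivial vector $x=0$ (and for small $n$, e.g.\ $n=1$, where $(\ast)$ fails automatically and every orbit is finite) and the invocation of the standard fact that a discrete subgroup of $\Rn$ is never dense.
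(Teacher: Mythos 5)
Your proposal is correct and follows essentially the same route as the paper: the forward direction is delegated entirely to Proposition~\ref{proposition:dense}(d), and the converse is a contrapositive scaling argument. The only cosmetic difference is in the final step of the converse: you note that $\orb(x)\subseteq\rho\,\Zn$, a discrete (hence non-dense) set, while the paper instead normalizes $p$ to be primitive and observes that every $\gamma(p)$ remains a primitive integer vector of length $\geq 1$, so that $\orb(x)$ avoids the open ball $B_\xi(0)$; these are two phrasings of the same observation, and yours even avoids the (harmless) primitivity reduction.
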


\begin{proof} If $x=0$ we have nothing to prove.
Suppose  $x\not=0$ belongs to some rational halfline  $\R_{\geq 0}p$, say
$x=\xi p$ for some  $0\leq \xi$ and  $p\in \Zn$.
 We may assume $p$ primitive.
  Then every  $\gamma\in\GLn$ will send
$p$ to some primitive integer vector $\gamma(p)$ of  $\Rn,$ and
$\gamma(x)$ will have the form  $\xi\gamma(p)$. Since the length
of the shortest primitive integer vector in $\Rn$ is 1, it follows that
$\orb(x)$ is disjoint from the open ball $B_\xi(0)$. The same argument
holds a fortiori for $\SLn$-orbits.
% For the converse, the proof of
% { Proposition \ref{proposition:dense}} works equally well for
%$\GLn$- and (mutatis mutandis) for $\SLn$-orbits.
The converse follows from Proposition~\ref{proposition:dense}(d).
\end{proof}

%%%%%%%%%%%%%%%%%%%%%%%%%%%%%%
\section{Regular simplexes and the $\lambda_i$-measure of a rational polyhedron}
%%
%%
%\commento{L to D: As it is written, the title of the section seems to be talking about  ``regularity of a rational polyhedron''. What about:
%``Regular triangulations and  rational measure of rational polyhedra in $\Rn$'' \\
%D to L: see what I did. I preferred to stay on a single line}
%%
%%
\label{section:preliminaries}
%%%%%%%%%%%%%%%%%%%%%%%%%%%%%%
%%%%%%%%%%%%%%%%%%%%%%%%%%%%%%%

In this section we present the necessary material for our classification of $\GLn$-orbits of rational affine spaces. We refer the reader to \cite{ewa, sta} for elementary background on simplicial complexes and polyhedral topology.

For any  set $E\subseteq \Rn$
  the   {\it affine hull}  
 $\,\,\aff(E)\,\,$    is the set of all {\it affine combinations} in  $\Rn$
of elements of $E$.  

A finite set  $\{z_1,\ldots,z_m\}$ of  points in
$\Rn$ is said to be
{\it affinely independent}  if    none of its  elements 
 is  an affine combination of the remaining elements.
 One then easily sees that a subset $R$ of  $\Rn$
is  an $m$-dimension rational affine subspace
 of $\Rn$ iff there exist 
 affinely independent
% \commento{L to D: Added.}
 $v_0,\ldots,v_m\in\Qn$ such that 
 $R=\aff(v_0,\ldots,v_m)$.
 For $\,\,0\leq m\leq n$, an {\it m-simplex}
in $\Rn$ is the
{\it convex hull} $T = \conv(v_{0},\ldots,v_{m})$ of $m+1$ affinely
independent points $v_{0},\ldots,v_{m}\in \Rn$.  The {\it  vertices}
$v_{0},\ldots,v_{m}$ are uniquely determined by $T$.
$T$ is said to be a {\it rational simplex}  if its vertices are
rational.  
%%%
A {\it rational polyhedron}  $P$  in $\Rn$ is the union
of finitely many rational simplexes  $T_i$  in $\Rn$.
$P$ need not be convex or connected.
The $T_i$ need not have the same dimension.
%%%

By  the  {\it denominator}  $\den(x)$ of a rational point
  $x=(x_1,\dots,x_n)$
 we understand the least common denominator 
  of its coordinates. 
The  {\it homogeneous correspondent}
of a rational point $x=(x_1,\ldots,x_n)\in \Qn$ is the 
integer  vector 
\[
 \widetilde{ x } = \bigl(\den(x)\cdot x_1,
 \ldots, \,\den(x)\cdot x_n,\,\,\den(x)\bigr) 
 \in \Zed^{n+1}.
\]

Let  $n =1,2,\ldots$  and  $m=0,\ldots,n$.
Following \cite{mun-cpc},
a rational $m$-simplex  
$T=\conv(v_0,\ldots,v_m)\subseteq \Rn$
is said to be  {\it {\rm(}Farey{\rm)} regular} 
if the set $\{\tilde v_0 ,\ldots, \tilde v_m\}$ of 
homogeneous correspondents of the vertices of $T$
can be extended to a   basis  of
the free abelian group $\Zed^{n+1}$.

By a  {\it simplicial complex} in $\Rn$ we mean a finite set 
$\Delta$ of 
simplexes $S_i$ in $\Rn$, closed 
under taking faces,
 and having the further property that any two 
elements of $\Delta$ intersect in a common face.  
The complex $\Delta$ is said to be {\it rational} if the vertices
of all $S_i\in \Delta$ are rational.
For every complex $\Delta$, its {\it support} $|\Delta|
\subseteq \Rn$ is the
pointset union of all 
simplexes of $\Delta$.   
In this case we say that $\Delta$ is a triangulation of $|\Delta|$.
We  say that 
$\Delta$ is  {\it regular} if 
 every simplex of $\Delta$ is %(Farey)  
regular.

 %%%%%%%%%%%%%%%%%%%%%%%%%%%%%%%%%%
 %%%%%%%%%%%%%%%%%%%%%%%%%%%%%%%%%%    
\subsection{The rational measure  $\lambda_d\,$}
\label{subsection:rational-measure}
 %%%%%%%%%%%%%%%%%%%%%%%%%%%%%%%%%%
For fixed $n=1,2,\dots$   let
  $P \subseteq\Rn$ be a  
  (not necessarily rational) polyhedron.
For any  
triangulation $\Delta$ of $P$
and $i=0,1,\ldots$  let 
  $\Delta^{\max}(i)$ denote   the set of
maximal $i$-simplexes of $\Delta$.
The
{\it $i$-dimensional part $P^{(i)}$ of $P$}
is now defined by
\begin{equation}
\label{equation:part}
P^{(i)}=\bigcup\{T\mid T\in \Delta^{\max}(i)\}.
\end{equation}
%
%
%\commento{L to D: I am not sure about this notation $\bigcup\{T\in \Delta^{\max}(i)\}$. Shouldn'y it be $\bigcup\{T\mid T\in \Delta^{\max}(i)\}$?}
%
%
Since 
   any two triangulations of $P$ have a
  joint subdivision, the definition of 
$P^{(i)}$ does not depend on the chosen
  triangulation $\Delta$  of $P$.
If   $P^{(i)}$ is nonempty, then it is  
 an $i$-dimensional polyhedron
 (i.e., a finite union of $i$-simplexes in $\Rn$)
 whose   $j$-dimensional part  $P^{(j)}$ 
 is empty for each   $j\not= i$.
 Trivially, $P^{(k)}=\emptyset$  for each
 integer $k>\dim(P)$.

For every %(Farey)
 regular $m$-simplex  
$S=\conv(x_0,\ldots,x_m)\subseteq  \Rn$
we use the notation  
\[
\den(S)=\prod_{j=0}^m  \den(x_j).  
\]
Let $R$ be a  {\it rational} polyhedron   $\Rn$.
In \cite[Lemma 2.1]{mun-cpc} it is proved that $R$ has a regular triangulation.
For every 
regular triangulation $\Delta$ of $R$,
and $i=0,1,\ldots,$  the rational number
  $\lambda(n,i,R,\Delta)$  is defined by
\[
\lambda(n,i,R,\Delta)=
\begin{cases}
 \sum_{T\in \Delta^{\max}(i)}
\frac{1}{i!\,\,\den(T)} & \mbox{ if } \Delta^{\max}(i)\not=\emptyset\\[0.3cm]
0 &   \mbox{ if }  \Delta^{\max}(i)=\emptyset.
\end{cases}
\]%
As proved in  \cite[Theorem 2.3]{mun-cpc}, 
$\lambda(n,i,R,\Delta)$
does not depend on the regular triangulation   $\Delta$ of $R$.
The proof of this result
 relies upon the 
 solution of the weak Oda conjecture by  
Morelli and W{\l}odarczyk 
\cite{mor, wlo}.
Thus, for any rational polyhedron
$R\subseteq \Rn$ and $i=0,1,2\dots,$  we can unambiguously write
\[
\lambda_i(R) = \lambda(n,i,R,\Delta),
\]
where  $\Delta$  is an arbitrary regular
triangulation of $R$. 
We say that $\lambda_i(R)$  is the
{\it {$i$-dimensional}   rational measure}
of $R$.  Trivially,  $\lambda_i(R)=0$
for each integer  $i>\dim(R)$.

\bigskip
For a characterization of $\lambda_i$
let  
\[
\mathcal G_n=\GLn \ltimes \Zn
\]
denote  the
   group of transformations
of the form
$
x\mapsto \mathcal{U}x + b   \quad (x\in \Rn),
$
 where $b\in \Zn$
 and  $\mathcal{U}$ is an  integer $(n \times n)$-matrix with   determinant
 $\pm 1$.  
 $\mathcal G_n$ is known as the $n$-dimensional affine group over the
 integers.  Also let 
 $
\mathscr P^{(n)}
$
denote the set of all 
  rational polyhedra in $\Rn$.

Building on the main
result of
\cite{mun-dcds}, in
 \cite[1.1, 4.1, 6.2]{mun-cpc}  the following result is proved:

\begin{theorem}
\label{theorem:volumi}
For  each $n=1,2,\ldots$ and 
$\,\,i=0,1,\ldots, $  the map
$\lambda_i\colon \mathscr P^{(n)}\to \R_{\geq 0}$
has the following properties,
for all $P,Q \in \mathscr P^{(n)}:$

\begin{description}
\item[(i) \  $\mathsf{Invariance}$]
  If
  $P=\gamma(Q)$ for some  $\gamma\in \mathcal G_n$
then $\lambda_{i}(P)=\lambda_{i}(Q)$. 

 \smallskip

\item[(ii) $\mathsf{Valuation}$] 
$\lambda_{i}(\emptyset)=0,\,\,$
$\lambda_{i}(P)=\lambda_i(P^{(i)})$,  and the restriction of 
 $\lambda_{i}$ to the set of all 
 rational polyhedra  $P,Q$  in $ \Rn$ 
 having dimension  $\leq i$ is a {\rm valuation}: in other words,
\[
		\lambda_{i}(P)+\lambda_{i}(Q)=
			\lambda_{i}(P\cup Q)+
			\lambda_{i}(P\cap Q).
\]
 
  \smallskip

\item[(iii) $\mathsf{Conservativity}$]  
Let  $(P,0)=\{(x,0)\in \R^{n+1}\mid x\in P \}$.
Then $\lambda_i(P)=\lambda_i(P,0)$. 

  \smallskip

\item[(iv) $\mathsf{Pyramid}$]  
For
$k=1,\ldots,n$, if
  $\,\,\conv(v_0,\ldots,v_k)$
is a %(Farey) 
regular $k$-simplex in $\Rn$
with  $v_0\in \Zn$ then
\[
\lambda_k(\conv(v_0,\ldots,v_k))=
 {\lambda_{k-1}(\conv(v_1,\ldots,v_k))}/{k}.
\]
 
   \smallskip
   
\item[(v) $\mathsf{Normalization}$]  
Let 
$j=1,\ldots,n$.  Suppose  the set 
  $B=\{w_1,\ldots,w_j\} \subseteq  \Zn$
is part of a basis of the free abelian 
  group $\Zn$.
  Let the closed   
parallelotope $\mathcal P_B\subseteq \Rn$
  be defined by
\[
\textstyle \mathcal P_B =\left\{x\in \Rn\mid x=
  \sum_{i=1}^j \gamma_iw_i,\,\,\, 0\leq \gamma_i\leq 1\right \}.
\]
  Then  $\lambda_j(\mathcal P_B)=1$.
 
 \smallskip
   
  \item[(vi) $\mathsf{Proportionality}$]
  Let 
$A$ be an $m$-dimensional rational affine subspace of $\Rn$  for some $m=0,\ldots,n$.  
 Then there is a constant $\kappa_A>0$,
 only depending on $A$,   such that 
  $\,\,\,\lambda_m(Q) = \kappa_A\cdot \mathcal H^m(Q)$
  for every rational 
  $m$-simplex
   $Q\subseteq A$.
Here as usual,  $\mathcal H^m$ denotes the $m$-dimensional
 Hausdorff measure, \cite{fed}. Moreover, if $m=n$, then $\kappa_A=1$
\end{description}

Conversely, conditions (i)-(vi)  uniquely characterize the maps $\lambda_i$
among all maps $\mu \colon \mathscr P^{(n)}\to \R_{\geq 0}$. 
\end{theorem}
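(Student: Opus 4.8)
This is established in \cite[1.1, 4.1, 6.2]{mun-cpc}, building on \cite{mun-dcds}; I sketch the argument. \emph{Properties \textup{(i)--(vi)}.} Invariance (i): an element $\gamma\in\mathcal G_n$, $\gamma(x)=\mathcal Ux+b$, extends to a unimodular linear map of $\R^{n+1}$ (block-triangular in $\mathcal U,b,0,1$) acting on homogeneous correspondents, and such a map carries primitive integer vectors to primitive integer vectors and preserves the property of extending to a basis of $\Zed^{n+1}$; hence it sends regular triangulations to regular triangulations and preserves $\den(T)$ for every regular simplex $T$, leaving the defining sum for $\lambda_i$ unchanged. For (ii), $\lambda_i(P)=\lambda_i(P^{(i)})$ is immediate since only maximal $i$-simplexes enter the sum, and the valuation identity follows by choosing a regular triangulation of $P\cup Q$ compatible with $P$, $Q$, $P\cap Q$ (available by \cite[Lemma 2.1]{mun-cpc} and standard subdivision) and comparing the index sets of its $i$-cells. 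Conservativity (iii) holds because $x\mapsto(x,0)$ preserves regularity and $\den$; Pyramid (iv) is the computation $\den(\conv(v_0,\dots,v_k))=\den(\conv(v_1,\dots,v_k))$ for $v_0\in\Zn$ together with $1/k!=(1/k)(1/(k-1)!)$; Normalization (v) follows by mapping $\mathcal P_B$ by an element of $\GLn$ onto the unit $j$-cube in $\R^j\times\{0\}$, triangulating it into $j!$ regular simplexes of denominator $1$, and summing. Property (vi) is the deepest: the triangulation-independence of $\lambda_m$ (resting on the weak Oda conjecture \cite{mor, wlo}) makes $\lambda_m$ restricted to rational $m$-simplexes of $A$ a valuation that is a constant multiple $\kappa_A$ of $\mathcal H^m$; for $m=n$ one computes $\kappa_{\Rn}=1$, since for a regular $n$-simplex the basis identity $|\det(\widetilde v_0,\dots,\widetilde v_n)|=1$ gives $\mathcal H^n(\conv(v_0,\dots,v_n))=1/(n!\,\den(v_0)\cdots\den(v_n))=\lambda_n(\conv(v_0,\dots,v_n))$.

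\emph{Uniqueness.} Let $(\mu_i)$ satisfy (i)--(vi). By (ii), $\mu_i$ is determined by its values on pure $i$-dimensional rational polyhedra, and on such an $R$ a regular triangulation $\Delta$ together with the valuation property (used inductively: the pairwise intersections of the maximal $i$-simplexes have dimension $<i$, hence $\mu_i$-measure $0$) gives $\mu_i(R)=\sum_{T\in\Delta^{\max}(i)}\mu_i(T)$, exactly as for $\lambda_i$. So it suffices to show $\mu_i(T)=\lambda_i(T)=1/(i!\,\den(T))$ for every regular $i$-simplex $T$. By (vi) for $\mu$ there is, for each rational affine $i$-subspace $A$, a constant $\kappa_A(\mu)$ with $\mu_i(Q)=\kappa_A(\mu)\,\mathcal H^i(Q)$ for all rational $i$-simplexes $Q\subseteq A$; by (i) it depends only on the $\mathcal G_n$-orbit of $A$, so it is enough to match $\mu_i$ with $\lambda_i$ on one rational $i$-simplex of each such $A$. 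If $A$ is linear, $\Zn\cap A$ is a rank-$i$ direct summand of $\Zn$ with a basis $w_1,\dots,w_i$ extendable to a basis of $\Zn$, and $\mathcal P_{\{w_1,\dots,w_i\}}$ subdivides into $i!$ images under $\GLn$ of the regular, denominator-$1$ simplex $S_0=\conv(0,w_1,w_1+w_2,\dots,w_1+\cdots+w_i)$; thus (i), (ii) and (v) force $\mu_i(S_0)=1/i!=\lambda_i(S_0)$. If $A$ is affine and meets $\Zn$, translating by a lattice point and using (i) reduce to the linear case. These two cases hold in every dimension.

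\emph{Main obstacle: $A$ affine and disjoint from $\Zn$.} Here one builds, inside $A$, a regular $i$-simplex $Q_0=\conv(v_0,\dots,v_i)$ whose homogeneous correspondents form a basis of the saturated sublattice $\Zed^{n+1}\cap L$, where $L\subseteq\R^{n+1}$ is the linear span of $\{(x,1):x\in A\}$, together with a lattice point $p\in\Zn$ (necessarily $p\notin A$) such that $\conv(p,v_0,\dots,v_i)$ is a regular $(i+1)$-simplex — the delicate requirement being that $(p,1)$ project to a primitive element of $\Zed^{n+1}/(\Zed^{n+1}\cap L)$, which clearing denominators permits. Since $\aff(\conv(p,v_0,\dots,v_i))$ contains the lattice point $p$, the affine-with-lattice-point case in dimension $i+1$ gives $\mu_{i+1}(\conv(p,v_0,\dots,v_i))=\lambda_{i+1}(\conv(p,v_0,\dots,v_i))$; the Pyramid identity (iv), applied to both $\mu$ and $\lambda$, then yields $\mu_i(Q_0)=(i+1)\,\mu_{i+1}(\conv(p,v_0,\dots,v_i))=\lambda_i(Q_0)$, and (vi) propagates equality to all rational $i$-simplexes of $A$. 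I expect the construction of the integral apex $p$ — leaving $A$ in a lattice direction while keeping the enlarged simplex regular — to be the crux of the whole argument, being the one place where (iv), (vi) and the lattice geometry must be combined.
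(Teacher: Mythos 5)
The paper contains no proof of this theorem: the statement is quoted verbatim from \cite[1.1, 4.1, 6.2]{mun-cpc}, which in turn builds on \cite{mun-dcds, mor, wlo}, so there is no internal argument to compare against. Your sketch of properties (i)--(vi) follows the natural lines and is sound (block-triangular lift of $\gamma\in\mathcal G_n$ to $\mathsf{GL}(n+1,\Zed)$ acting on homogeneous correspondents; faces of regular simplexes are regular; the $j!$-fold decomposition of $\mathcal P_B$; the $\kappa_A$ from triangulation-independence plus finite additivity). The problem lies in the uniqueness half, precisely at the step you single out as the crux.

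You claim that for an $i$-dimensional rational affine $A\subseteq\Rn$ disjoint from $\Zn$ there is $p\in\Zn$ with $\tilde p=(p,1)$ primitive modulo $M=\Zed^{n+1}\cap L$, ``which clearing denominators permits.'' This is false in general. Take $n=2$ and $A=\{(x,y)\in\R^2 : x+y=2/5\}$. Then $L=\ker(5,5,-2)\subseteq\R^3$, $M=\Zed^3\cap L$ is saturated of rank $2$, and the quotient $\Zed^3/M\cong\Zed$ is realized by $\phi(a,b,c)=5a+5b-2c$. For $p=(p_1,p_2)\in\Zed^2$ one has $\phi(p,1)=5(p_1+p_2)-2$, which is never $\pm1$. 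Since $|\phi(\tilde p)|=|\det(\tilde p,\tilde v_0,\tilde v_1)|$ for \emph{any} basis $\{\tilde v_0,\tilde v_1\}$ of $M$, no choice of a regular $1$-simplex $\conv(v_0,v_1)\subseteq A$ and no integer apex $p$ produces a regular $2$-simplex $\conv(p,v_0,v_1)$; the Pyramid transfer cannot be run inside $\R^n$. The repair is Conservativity (iii), which you list as an axiom but never invoke in the uniqueness direction. Passing to $A'=(A,0)\subseteq\R^{n+1}$, with $M'=\Zed^{n+2}\cap L'$, the vector $\bigl((p,1),1\bigr)$ (last new integer coordinate equal to $1$) is automatically primitive modulo $M'$ for every $p\in\Zn$ because its image in $\Zed^{n+2}/M'\cong(\Zed^{n+1}/M)\oplus\Zed$ has $1$ in the new $\Zed$-summand; so a regular apex exists in $\R^{n+1}$, the affine-hull-meets-$\Zed^{n+1}$ case applies, Pyramid gives $\mu_i=\lambda_i$ on $A'$, and (iii) pulls this back to $A$. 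Without Conservativity at exactly this point the uniqueness argument does not close, so your sketch as written has a genuine gap even though the overall strategy is salvageable.
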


%%%%%%%%%%%%%%%%%%%
%%%%%%%%%%%%%%%%%%%%%%%%%
\section{Orbits of rational affine subspaces of $\Rn$: Preliminaries}
%%%%%%%%%%%%%%%%%%%%%%%%%%
\label{section:preliminaries-leo}
%%%%%%%%%%%%%%%%%

\begin{lemma}
\label{Lemma:GammaFromSimplex}
For each rational point 
$x\in\Q^n$
and  $\psi\in\GLn$, 
$\den(x)=\den(\psi(x))$.
      Moreover, if
$\conv(0,v_{1},\ldots,v_{n})\subseteq \R^{n}$ and
$\conv(0,w_{1},\ldots,w_{n})\subseteq \R^{n}$ 
are  %(Farey) 
 regular
$n$-simplexes, 
the following conditions are equivalent:
\begin{itemize}
\item[(i)] $\den(v_{i})=\den(w_{i})$ for all
$i\in\{1,\ldots,n\}$;

\smallskip
\item[(ii)] there exists $\gamma\in\GLn$ such that $\gamma(v_i)=w_i$ for each $i\in\{1,\ldots,n\}$.
\end{itemize}
\end{lemma}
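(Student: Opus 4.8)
The first assertion is the easy warm-up: for $x \in \Qn$ and $\psi \in \GLn$, the homogeneous correspondent $\widetilde{x} \in \Zed^{n+1}$ is primitive (its coordinates have gcd $1$, since the last coordinate $\den(x)$ divides them all and $\widetilde{x}/\den(x)$ has a $1$ in position $n+1$), and $\psi$ extends to the block matrix $\widehat{\psi} = \psi \oplus (1) \in \mathsf{GL}(n+1,\Zed)$ which sends $\widetilde{x}$ to a primitive integer vector whose last coordinate is still $\den(x)$. Hence that last coordinate is forced to be $\den(\psi(x))$, and $\den$ is a $\GLn$-invariant of rational points. I would state this in one or two lines.

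For the equivalence, the direction (ii)$\Rightarrow$(i) is immediate from the invariance of $\den$ just established. The substantive direction is (i)$\Rightarrow$(ii). The plan is to pass to homogeneous correspondents and argue in $\Zed^{n+1}$. Regularity of $\conv(0,v_1,\dots,v_n)$ means that $\{\widetilde{v_1},\dots,\widetilde{v_n}\}$ together with $\widetilde{0} = \epsilon_{n+1}$ (the last standard basis vector) forms a basis of $\Zed^{n+1}$; likewise $\{\widetilde{w_1},\dots,\widetilde{w_n},\epsilon_{n+1}\}$ is a basis. So there is a unique $\Theta \in \mathsf{GL}(n+1,\Zed)$ with $\Theta(\widetilde{v_i}) = \widetilde{w_i}$ for $i=1,\dots,n$ and $\Theta(\epsilon_{n+1}) = \epsilon_{n+1}$. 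The key point is then to check that $\Theta$ has the block-triangular form $\widehat{\gamma} = \gamma \oplus (1)$ for some $\gamma \in \GLn$, which is where hypothesis (i) is used: writing $\Theta \epsilon_{n+1} = \epsilon_{n+1}$ tells us the last column of $\Theta$ is $\epsilon_{n+1}$, but to conclude that the last \emph{row} of $\Theta$ is $(0,\dots,0,1)$ I need to know that $\Theta$ maps the hyperplane $\{z_{n+1} = \den(v_i)^{-1}\cdot(\text{something})\}$... more precisely, I need the last coordinate of $\Theta(\widetilde{v_i})$ to equal the last coordinate of $\widetilde{v_i}$. That is exactly $\den(w_i) = \den(v_i)$, i.e. hypothesis (i). Since $\{\widetilde{v_1},\dots,\widetilde{v_n},\epsilon_{n+1}\}$ spans $\Zed^{n+1}$ and $\Theta$ preserves the last coordinate of each of these $n+1$ basis vectors, $\Theta$ preserves the last coordinate of every vector; hence the bottom row of the matrix of $\Theta$ is $(0,\dots,0,1)$, so $\Theta = \gamma \oplus (1)$ with $\gamma$ an integer matrix of determinant $\pm 1$, i.e. $\gamma \in \GLn$.

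It then remains to verify that this $\gamma$ actually sends $v_i$ to $w_i$ in $\Rn$ (not just their homogeneous correspondents in $\Zed^{n+1}$). From $\Theta(\widetilde{v_i}) = \widetilde{w_i}$ and the block form, the first $n$ coordinates give $\gamma(\den(v_i)\, v_i) = \den(w_i)\, w_i$, and since $\den(v_i) = \den(w_i) \neq 0$ we may divide to get $\gamma(v_i) = w_i$, as desired. I expect the only genuine obstacle is the bookkeeping in the previous paragraph — namely being careful that regularity is being used in the strong form ``$\{\widetilde{v_0},\dots,\widetilde{v_n}\}$ extends to (in fact \emph{is}, since there are $n+1$ of them) a basis of $\Zed^{n+1}$'' and that $\widetilde{0}=\epsilon_{n+1}$ — after which the argument that $\Theta$ preserves last coordinates, hence splits as a block matrix, is routine linear algebra over $\Zed$.
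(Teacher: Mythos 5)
Your proof is correct and follows essentially the same route as the paper's: pass to homogeneous correspondents, use regularity to get that $\{\tilde 0,\tilde v_1,\ldots,\tilde v_n\}$ and $\{\tilde 0,\tilde w_1,\ldots,\tilde w_n\}$ are bases of $\Zed^{n+1}$, transfer one basis to the other by a map in $\mathsf{GL}(n+1,\Zed)$, and then use the hypothesis $\den(v_i)=\den(w_i)$ together with $\tilde 0=\epsilon_{n+1}$ to force the block form $\gamma\oplus(1)$ and hence descend to $\gamma\in\GLn$. The only cosmetic difference is that the paper introduces both $\mathcal C$ and its inverse $\mathcal D$ explicitly, whereas you work with a single $\Theta\in\mathsf{GL}(n+1,\Zed)$ and note that preserving last coordinates on a basis suffices; you also spell out the (indeed easy, but used) invariance of $\den$ under $\GLn$, which the paper leaves implicit.
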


\begin{proof}
(ii)$\Rightarrow$(i) is trivial.
For the converse direction (i)$\Rightarrow$(ii), from
the  regularity
of 
$\conv(0,v_{1},\ldots,v_{n})$ and
$\conv(0,w_{1},\ldots,w_{n})$ 
we obtain
    bases $\{\tilde{0},\tilde{v}_{1},\ldots,
\tilde{v}_{n}\}$ 
and 
$\{\tilde{0},\tilde{w}_{1},\ldots,
\tilde{w}_{n}\}$
  of the free
abelian group $\Zn$. (Note $\tilde{0}=(0,\ldots,0,1)$).
There are   integer ($(n+1)\times(n+1)$)-matrices
$\mathcal C$ and $\mathcal D$
such that $\mathcal C \tilde{v}_i=\tilde{w}_i$
 and 
$\mathcal D \tilde{w_i}=\tilde{v}_i$ for each $i\in\{0,1,\ldots,n\}$.
It follows that $\mathcal C \cdot \mathcal D$ and 
$\mathcal D\cdot \mathcal C$ are equal to the identity matrix. 
For each 
$i\in\{0,1,\ldots,n\}$ the
 last coordinate of $\tilde{v}_i$, as well as the last coordinate
 of $\tilde{w}_i$,  coincide
 with  $\den(v_i)=\den(w_i)$. Thus $\mathcal C$ and $\mathcal D$ 
  have the form
\[
\mathcal{C}=\left(\begin{tabular}{c|c} $\mathcal U$ & $0$ \\ \hline $0,\ldots,0 $ &
$1$ \end{tabular}\right) \quad \mathcal{D}=\left(\begin{tabular}{c|c} $\mathcal V$ & $0$ \\ \hline $0,\ldots,0 $ &
$1$ \end{tabular}\right) 
\] 
 for some  integer $(n\times n)$-matrices $\mathcal U$ and $\mathcal V$.
For each  $i=0,1,\ldots,n$ we have 
$\mathcal{C}\tilde{v}_{i}= \mathcal C(\den(v_{i})\cdot(v_{i},1))
=\den(w_{i})\cdot(\mathcal Uv_{i},1).  $ 
From
$\mathcal C\tilde{v}_{i}=\tilde{w}_{i}=\den(w_{i})\cdot(w_{i},1)$ 
we obtain
$\mathcal Uv_{i}=w_{i}$.
Similarly,  $\mathcal Vw_i =v_i$. 
In conclusion,
both maps $\gamma(x)=\mathcal Ux$ 
and $\mu(y)=\mathcal Vy$ are in $\GLn$.
Further, 
$\gamma^{-1}=\mu$ and $\gamma(v_i)=w_i$ for each $i\in\{0,1,\ldots,n\}$.
\end{proof}

 \medskip

\subsubsection*{Notation.}
For each $w\in \Q^n$ we use the notation 
\[
||w||_{\Q}=\lambda_1(\conv(0,w)),
\]
 where
$\lambda_1$ is the 1-dimensional rational measure 
introduced  in  Subsection~\ref{subsection:rational-measure}.
Further, for
  every  nonempty rational affine space $F\subseteq \R^n$
 we let 
\[
d_F=\min\{\den(v)\mid v\in  F\cap \Qn\}.
\]
 
 \medskip

\begin{lemma}
\label{lemma:wlog-isobunch}  
For  fixed $n=1, 2,3,\ldots$ and $e=0,\ldots,n,$
let  $F$ be  an $e$-dimensional rational affine space in 
 $\Rn$.
Let  
 $v_0\in F\cap \Qn$  with  $\den(v_0)=d_F$.
Then there
are rational points
 $v_1,\ldots,v_e\in F$, all with denominator $d_F$,
 such that $\conv(v_0,\ldots,v_e)$ is a %(Farey) 
regular
 $e$-simplex. 
\end{lemma}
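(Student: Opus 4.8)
The plan is to build the points $v_1,\dots,v_e$ one at a time, working in the ``homogeneous'' picture where rational points of $\Rn$ correspond to primitive integer vectors of $\Zed^{n+1}$ via $x \mapsto \tilde x$. Let $e'\subseteq\{0,\dots,n\}$ and recall that since $F$ is an $e$-dimensional rational affine subspace, the set $\widetilde F = \{\tilde v \mid v\in F\cap\Qn\}\cup\{0\}$ spans an $(e+1)$-dimensional rational linear subspace $W$ of $\R^{n+1}$; moreover $W\cap\Zed^{n+1}$ is a rank-$(e+1)$ free abelian group. The first observation is that $\den(v_0)=d_F$ forces $\tilde v_0$ to be a \emph{primitive} vector of the lattice $W\cap\Zed^{n+1}$ that, in addition, lies on a ``lowest'' affine slice: concretely, $\tilde v_0$ should be part of a basis of $W\cap\Zed^{n+1}$, because no rational point of $F$ has smaller denominator, i.e.\ no lattice vector of $W$ has strictly smaller last coordinate (in absolute value) than $d_F$ while still projecting into $F$. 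I would spell this minimality out as: if $\tilde v_0$ were not primitive in $W\cap\Zed^{n+1}$, or more precisely if it were not extendable to a basis of that lattice using vectors with last coordinate of absolute value $\le d_F$, one could produce $v\in F\cap\Qn$ with $\den(v)<d_F$, contradicting the choice of $d_F$.

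Next I would choose a basis $\tilde v_0, u_1,\dots,u_e$ of the lattice $W\cap\Zed^{n+1}$ extending $\tilde v_0$. Writing $u_k = (a_k, c_k)$ with $a_k\in\Zed^n$ and $c_k\in\Zed$ the last coordinate, I want to replace each $u_k$ by a vector of the form $u_k' = u_k + m_k \tilde v_0$ whose last coordinate equals $d_F$ exactly (not merely $\le d_F$); this is possible iff $c_k \equiv d_F \pmod{d_F}$, i.e.\ iff $d_F \mid c_k$. The key sublemma to establish is precisely that $d_F$ divides $c_k$ for every $k$ after a suitable choice of basis — equivalently, that the image of $W\cap\Zed^{n+1}$ under the ``last coordinate'' functional is exactly $d_F\Zed$. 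This again follows from the defining minimality of $d_F$: the last-coordinate functional maps $W\cap\Zed^{n+1}$ onto some subgroup $\delta\Zed$ of $\Zed$, every point of $F\cap\Qn$ has denominator a multiple of $\delta$ (after clearing), and $d_F$ is the smallest such, so $\delta = d_F$. Hence each $u_k$ can be shifted by an integer multiple of $\tilde v_0$ to have last coordinate exactly $d_F$, and the shifted family $\tilde v_0, u_1',\dots,u_e'$ is still a basis of $W\cap\Zed^{n+1}$.

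Now set $v_k$ to be the rational point of $\Rn$ with $\tilde v_k = u_k'$, so that $\den(v_k)=d_F$ and $v_k\in F$ for each $k=1,\dots,e$ (the latter because $u_k'\in W$ and has positive last coordinate). Since $\tilde v_0, \tilde v_1,\dots,\tilde v_e$ is a basis of $W\cap\Zed^{n+1}$, which is a direct summand of $\Zed^{n+1}$ (being the intersection of $\Zed^{n+1}$ with a rational subspace), this family extends to a basis of $\Zed^{n+1}$; by definition this says $\conv(v_0,v_1,\dots,v_e)$ is a regular $e$-simplex, provided the $v_k$ are affinely independent — but affine independence of $v_0,\dots,v_e$ is equivalent to linear independence of $\tilde v_0,\dots,\tilde v_e$, which holds since they form a basis of a rank-$(e+1)$ lattice. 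That completes the construction.

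The main obstacle, and the step deserving the most care, is the divisibility sublemma $d_F\mid c_k$ — i.e.\ pinning down that the last-coordinate functional sends $W\cap\Zed^{n+1}$ onto exactly $d_F\Zed$ rather than a proper subgroup's worth of slack. The subtlety is translating ``$\den(v)$ is minimized at $d_F$'' into a statement about the lattice $W\cap\Zed^{n+1}$: one must check that for a primitive $w\in W\cap\Zed^{n+1}$ with last coordinate $c>0$, the corresponding rational point has denominator \emph{equal to} $c$ (not a proper divisor), which uses primitivity; and then that the subgroup of $\Zed$ generated by all such $c$ is generated by its smallest positive element, namely $d_F$. Everything else — extending to bases, shifting by multiples of $\tilde v_0$, transferring between the affine and homogeneous pictures, and the direct-summand property — is routine lattice bookkeeping.
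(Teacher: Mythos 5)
Your proof is correct and uses the same homogeneous picture as the paper's, but organizes the construction differently. The paper starts by citing an external lemma (\cite[Lemma 2.1]{mun-cpc}) to obtain a regular $e$-simplex $\conv(v_0,w_1,\dots,w_e)\subseteq F$, then replaces each $\tilde w_i$ by $\tilde w_i-m_i\tilde v_0$ with $m_i$ chosen so that the new last coordinate lands in $(0,d_F]$; the resulting point is still in $F$, so its denominator is also $\ge d_F$, and the equality $\den(v_i)=d_F$ falls out of this squeeze, with no need to prove divisibility in advance. You instead extend $\tilde v_0$ directly to a basis of the lattice $W\cap\Zed^{n+1}$ (elementary lattice algebra, so your route is more self-contained) and then prove the divisibility sublemma $\pi(W\cap\Zed^{n+1})=d_F\Zed$ before shifting. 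That sublemma is indeed where the care is needed, and your write-up is a bit loose there: the main paragraph's argument (``every point of $F\cap\Qn$ has denominator a multiple of $\delta$, and $d_F$ is the smallest such'') only yields $\delta\mid d_F$; the needed converse, that a primitive $u\in W\cap\Zed^{n+1}$ with positive last coordinate $\delta$ corresponds to a point $v\in F$ with $\den(v)=\delta\ge d_F$, is gestured at only in your final paragraph and should be made the centrepiece. Also, the opening remark that $\den(v_0)=d_F$ ``forces'' $\tilde v_0$ to be primitive in $W\cap\Zed^{n+1}$ is a red herring: any homogeneous correspondent is primitive in $\Zed^{n+1}$, and $W\cap\Zed^{n+1}$ is a direct summand, so primitivity (hence extendability to a basis) is automatic and does not use minimality of $d_F$ at all; the minimality is only needed for the divisibility step. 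With those two clarifications your argument is a valid and arguably more elementary alternative to the paper's proof.
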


\begin{proof}
 Starting from any rational
$e$-simplex $R\subseteq F$ 
having $v_0$ among its vertices and using
\cite[Lemma 2.1]{mun-cpc}, one immediately obtains
 rational points
$w_1,\ldots,w_e\in F$ such that $\conv(v_0,w_1,\ldots,w_e)$ is regular. 
If 
$\den(w_i)>d_F$
for some $i\in\{1,\ldots,e\}$,  then
let   the integer $m_i$  be uniquely determined by writing
  $m_i \cdot d_F<\den(w_i)\leq (m_i+1) \cdot  d_F$.
 Let $v_i\in F\cap\Qn$ be the unique rational point 
 with  $\tilde v_i=\tilde w_i-m_i \tilde v_0$. Then 
$d_F\leq\den(v_i)=\den( w_i)- m_i \cdot 
 \den(v_0)=\den( w_i)- m_i  \cdot  d_F\leq d_F$.
The  new $(e+1)$-tuple of integer vectors 
$(\tilde v_0,\tilde v_1,\ldots,\tilde v_e)$ can be completed to a basis of $\Zed^{n+1}$,
precisely  as 
 $(\tilde v_0,\tilde w_1\ldots,\tilde w_e)$ does.
Thus  $\conv(v_0,\ldots,v_e)$ is regular.
\end{proof}

%%%%%%%%%%%%%%%%%%%%%%%%
\section{Classification of rational affine subspaces of $\Rn$: Conclusion}
%%%%%%%%%%%%%%%%%%%%%%%%%%

Generalizing the above definition of $\mathcal P_B$ (Theorem~\ref{theorem:volumi}(v)),
given
  $v_1,\ldots,v_k\in \R^n$  with $k\leq n$ we let $\mathcal{P}(v_1,\ldots,v_k)$ denote the closed 
$k$-parallelotope generated by $v_1,\ldots,v_k$, in symbols,
\[\textstyle\mathcal{P}(v_1,\ldots,v_k)=\left\{\, \sum_{i=1}^{k}\alpha_i v_i\in \R^n\mid \alpha_1,\ldots,\alpha_k\in[0,1]\,\right\}.\]

\begin{lemma}\label{Lem:VF}
Let  $F\subseteq \R^n$ be an 
$e$-dimensional rational affine space
($e=0,\dots,n$). Let the points
 ${v_0,\ldots,v_e,w_0,\ldots,w_e\in F}$
 satisfy  $\den(v_i)=\den(w_i)=d_F$ for each $i=0,\ldots,e$. Suppose further that both
  $\conv(v_0,\ldots,v_e)$ and $\conv(w_0,\ldots,w_e)$ are 
 regular  $e$-simplexes. 
Then 
$\lambda_{e+1}(\mathcal{P}(v_0,\ldots,v_e))=\lambda_{e+1}(\mathcal{P}(w_0,\ldots,w_e))$. 
\end{lemma}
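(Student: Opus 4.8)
The plan is to reduce the assertion to an equality of ordinary $(e+1)$-dimensional Euclidean volumes, and then to derive that equality from the regularity hypotheses by comparing two bases of one and the same lattice in the homogenization $\Zed^{n+1}$. I begin by disposing of the degenerate case: if $0\in F$, then $F$ is a rational \emph{linear} subspace of dimension $e$ (so $d_F=1$), each of $v_0,\ldots,v_e,w_0,\ldots,w_e$ lies in $F$, and hence both $\mathcal P(v_0,\ldots,v_e)$ and $\mathcal P(w_0,\ldots,w_e)$ are contained in the $e$-dimensional set $F$; since $e<e+1$ this forces $\lambda_{e+1}(\mathcal P(v_0,\ldots,v_e))=\lambda_{e+1}(\mathcal P(w_0,\ldots,w_e))=0$ and we are done. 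So assume $0\notin F$. Then the affinely independent points $v_0,\ldots,v_e$ are also \emph{linearly} independent: a nontrivial relation $\sum_i c_iv_i=0$ with $\sum_ic_i=0$ contradicts affine independence, while if $\sum_ic_i\neq0$ we may normalise to $\sum_ic_i=1$ and obtain $0\in\aff(v_0,\ldots,v_e)=F$. Hence $L:=\mathrm{span}(v_0,\ldots,v_e)$ is a rational linear subspace of dimension $e+1$; the same reasoning applies to the $w_i$, and since the linear span of a set equals the linear span of its affine hull, $L=\mathrm{span}(F)=\mathrm{span}(w_0,\ldots,w_e)$. Thus both parallelotopes are rational polyhedra lying in the \emph{same} $(e+1)$-dimensional rational linear subspace $L$, each of full dimension $e+1$ inside $L$.

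Next I would reduce to Euclidean volumes. Applying Proportionality (Theorem~\ref{theorem:volumi}(vi)) with $A=L$ and $m=e+1$ yields a constant $\kappa_L>0$ with $\lambda_{e+1}(Q)=\kappa_L\,\mathcal H^{e+1}(Q)$ for every rational $(e+1)$-simplex $Q\subseteq L$. Triangulating each parallelotope into rational $(e+1)$-simplexes inside $L$ and using that $\lambda_{e+1}$ and $\mathcal H^{e+1}$ are both valuations vanishing on polyhedra of dimension $\leq e$ (Theorem~\ref{theorem:volumi}(ii)), this proportionality extends to the two parallelotopes. Hence it suffices to prove $\mathcal H^{e+1}(\mathcal P(v_0,\ldots,v_e))=\mathcal H^{e+1}(\mathcal P(w_0,\ldots,w_e))$. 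Identifying $L$ isometrically with $\R^{e+1}$ and letting $V$ (resp.\ $W$) be the $(e+1)\times(e+1)$ matrix whose columns are the coordinate vectors of $v_0,\ldots,v_e$ (resp.\ $w_0,\ldots,w_e$), the two Hausdorff measures are $|\det V|$ and $|\det W|$, so the goal becomes $|\det V|=|\det W|$.

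The remaining equality I would extract from a lattice argument. Homogenise: set $\widetilde L=\mathrm{span}\{(v_0,1),\ldots,(v_e,1)\}\subseteq\R^{n+1}$; since the $(v_i,1)$ are linearly independent, $\dim\widetilde L=e+1$, and as $\widetilde L$ is the linear span of $F\times\{1\}$ it depends only on $F$ and equals $\mathrm{span}\{(w_0,1),\ldots,(w_e,1)\}$. Because $\den(v_i)=\den(w_i)=d_F$, the homogeneous correspondents are $\widetilde v_i=d_F\cdot(v_i,1)$ and $\widetilde w_i=d_F\cdot(w_i,1)$, all lying in the lattice $\Lambda:=\widetilde L\cap\Zed^{n+1}$. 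Regularity of $\conv(v_0,\ldots,v_e)$ says $\{\widetilde v_0,\ldots,\widetilde v_e\}$ extends to a basis of $\Zed^{n+1}$; consequently $\{\widetilde v_0,\ldots,\widetilde v_e\}$ is in fact a $\Zed$-basis of $\Lambda$, since any $z\in\Lambda$ is an integer combination of the full basis and membership $z\in\widetilde L=\mathrm{span}\{\widetilde v_i\}$ forces the coefficients of the remaining basis vectors to vanish. Likewise $\{\widetilde w_0,\ldots,\widetilde w_e\}$ is a $\Zed$-basis of $\Lambda$. Therefore there is $M=(M_{ji})_{j,i=0}^{e}\in\mathsf{GL}(e+1,\Zed)$ with $\widetilde w_i=\sum_{j=0}^{e}M_{ji}\widetilde v_j$ for all $i$. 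Reading off the last coordinate, which equals the denominator, gives $d_F=\sum_jM_{ji}d_F$, hence $\sum_jM_{ji}=1$ for every $i$; dividing the identity by $d_F$ then yields $w_i=\sum_{j=0}^{e}M_{ji}v_j$, i.e.\ $W=VM$, so $|\det W|=|\det M|\cdot|\det V|=|\det V|$, which completes the argument.

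The one point that is not pure bookkeeping is the passage in the last paragraph from the hypothesis ``$\{\widetilde v_i\}$ and $\{\widetilde w_i\}$ each extend to a basis of $\Zed^{n+1}$'' to ``they are two $\Zed$-bases of the single rank-$(e+1)$ lattice $\Lambda$'', together with the observation that the resulting unimodular $M$ has all column sums equal to $1$ and hence encodes an \emph{affine} integer-coefficient relation $w_i=\sum_jM_{ji}v_j$; once this is secured, the determinant comparison — and with it the lemma — is immediate, and the preliminary reduction through Proportionality is routine given Theorem~\ref{theorem:volumi}.
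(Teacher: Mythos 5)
Your proof is correct, but it takes a genuinely different route from the paper's. The paper constructs a concrete map $\gamma\in\GLn$ (via Lemma~\ref{Lemma:GammaFromSimplex}) carrying the $(e+1)$-dimensional linear hull $G$ of $F\cup\{0\}$ onto the standard coordinate subspace $\R\varepsilon_1+\cdots+\R\varepsilon_{e+1}$; there $\lambda_{e+1}$ coincides with $\mathcal H^{e+1}$ with constant $1$, and the authors finish by computing the common value of $\mathcal H^e$ of the two base simplexes via Proportionality and then invoking the pyramid picture (same base area, same supporting hyperplane, hence same $(e+1)$-volume). You instead stay in the ambient space, apply Proportionality directly on the rational $(e+1)$-subspace $L=\mathrm{span}(F)$ (tolerating an unknown constant $\kappa_L$), reduce to $|\det V|=|\det W|$, and obtain that equality from a clean lattice argument: both $\{\widetilde v_i\}$ and $\{\widetilde w_i\}$ extend to a basis of $\Zed^{n+1}$, hence each is a $\Zed$-basis of $\Lambda=\widetilde L\cap\Zed^{n+1}$, so the transition matrix $M$ is unimodular; reading off the homogenizing coordinate gives column sums $1$, which turns the homogeneous relation into the affine one $w_i=\sum_j M_{ji}v_j$ and hence $W=VM$. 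Your approach avoids Lemma~\ref{Lemma:GammaFromSimplex} entirely and replaces the pyramid-height geometry with a determinant computation; the price is the (standard, but not recorded in the paper) observation that a subset of a lattice basis that spans a rational subspace is automatically a basis of the sublattice cut out by that subspace, and the step extending Proportionality from simplexes to parallelotopes, which you correctly handle by triangulating and using that both $\lambda_{e+1}$ and $\mathcal H^{e+1}$ are valuations vanishing in dimension $\le e$.
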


\begin{proof}
Recall the notation of \eqref{equation:part} for the
 $i$-dimensional
part of a polyhedron.
If $0\in F$, then $(\mathcal{P}(0,v_0,\ldots,v_e))^{(e+1)}=\emptyset=(\mathcal{P}(0,w_0,\ldots,w_e))^{(e+1)}$,
and hence
\[\lambda_{e+1}(\mathcal{P}(v_0,\ldots,v_e))=0=\lambda_{e+1}(\mathcal{P}(w_0,\ldots,w_e)).
\]
Now assume that $0\notin F$.
Let $G$ be the linear subspace of $\R^n$ generated by $F\cup\{0\}$. 
Then $G$ is an $(e+1)$-dimensional rational subspace and $d_G=1$. 
By Lemma~\ref{lemma:wlog-isobunch}  there exist 
$z_1,\ldots, z_{e+1}\in G$ 
such that $\den(z_i)=1$ 
for each $i=1,\ldots,(e+1)$ and
 $\conv(0,z_1,\ldots, z_{e+1})$ is a regular $(e+1)$-simplex.
Let $\varepsilon_1,\ldots,\varepsilon_n$ be the canonical basis of 
the vector space $\R^n$. 
  Lemma~\ref{Lemma:GammaFromSimplex} yields 
$\gamma\in\GLn$ such that $\gamma(z_i)=\varepsilon_i$ 
for each $i=1,\ldots,(e+1)$. 
Then $\gamma(F)
=F'=\R \varepsilon_1+\cdots+\R\varepsilon_{e+1}$,
whence  there exist $v'_0,\ldots,v'_e,w'_0,\ldots,w'_{e}\in \R^{e}$
such that
$ \gamma(v_i)=(v'_i,0,\ldots,0)$ and $ \gamma(w_i)=(w'_i,0,\ldots,0)$ for each $i=0,\ldots,e$.
By  Theorem~\ref{theorem:volumi} (i) and (iii),
\begin{equation}\label{Eq:reduce}
\lambda_{e+1}(\mathcal{P}(v_0,\ldots,v_e))=\lambda_{e+1}(\gamma(\mathcal{P}(v_0,\ldots,v_e)))=\lambda_{e+1}(\mathcal{P}(v'_0,\ldots,v'_e))).
\end{equation}
Similarly, 
\begin{equation}
\lambda_{e+1}(\mathcal{P}(w_0,\ldots,w_e))=\lambda_{e+1}(\mathcal{P}(w'_0,\ldots,w'_e))).
\end{equation}
From  $\mathcal{P}(v'_0,\ldots,v'_e),\mathcal{P}(w'_0,\ldots,w'_e)\subseteq\R^e$ we get 
\begin{equation}
\lambda_{e+1}(\mathcal{P}(v'_0,\ldots,v'_e))
=\mathcal H^{e+1}(\mathcal{P}(v'_0,\ldots,v'_e))
\end{equation}
and
\begin{equation}
\lambda_{e+1}(\mathcal{P}(w'_0,\ldots,w'_e))
=\mathcal H^{e+1}(\mathcal{P}(w'_0,\ldots,w'_e))
\end{equation}
Theorem~\ref{theorem:volumi}(vi) yields a real 
  $\kappa_{F'}$ such that
\[
\mathcal H^e(\conv(v'_0,\ldots,v'_e))=\kappa_{F'} \lambda_{e}(\conv(v'_0,\ldots,v'_e))=\kappa_{F'} \frac{1}{e! d_F^{e+1}}.
\]
Similarly,  
$\mathcal H^e(\conv(w'_0,\ldots,w'_e))
=\kappa_{F'} \frac{1}{e! d_F^{e+1}}$.
 Then 
 \[\mathcal H^e(\conv(v'_0,\ldots,v'_e))
=\mathcal H^e(\conv(w'_0,\ldots,w'_e)).
\] 
Since $v'_i,w'_i\in F'$ for each $i=0,\ldots,e$, we can write
\begin{equation}\label{Eq:volume}
\mathcal H^{e+1}(\conv(0,v'_0,\ldots,v'_e))=\mathcal H^{e+1}(\conv(0,w'_0,\ldots,w'_e)).
\end{equation}
Finally, combining \eqref{Eq:reduce}-\eqref{Eq:volume}, we obtain
\begin{align*}
\lambda_{e+1}(\mathcal{P}(v_0,\ldots,v_e))&
=\lambda_{e+1}(\mathcal{P}(v'_0,\ldots,v'_e)))
= \mathcal H^{e+1}(\mathcal{P}(v'_0,\ldots,v'_e)) 
 \\
&= \mathcal H^{e+1}(\mathcal{P}(0,w'_0,\ldots,w'_e))
=\lambda_{e+1}(\mathcal{P}(w'_0,\ldots,w'_e)))\\
&=\lambda_{e+1}(\mathcal{P}(w_0,\ldots,w_e)).\qedhere
\end{align*}
\end{proof}

\medskip
\begin{definition}\label{Def:VF}
For every  
$e=0,1,\ldots,n$ and
$e$-dimensional rational affine space $F\subseteq \R^n$
 we let 
\[
  \mathsf V_F=
\lambda_{e+1}\bigr(\mathcal{P}(v_0,\ldots,v_e)\bigl),
\]
where  $v_0,\dots,v_e$ are rational points of $F$ of denominator
$d_F$ such that the $e$-simplex  $\conv(v_0,\dots,v_e)$ is
 regular.
The existence of these points is guaranteed by 
Lemma~\ref{lemma:wlog-isobunch}, 
and the independence of $\mathsf V_F$ 
from the choice of $v_0,\ldots,v_e$ follows from  Lemma~\ref{Lem:VF}.
\end{definition}

As usual, given  $\kappa \in\R$ and $S\subseteq \Rn$ we let
$\kappa S=\{\kappa x\mid x\in S\}$.

\begin{lemma}\label{Lem:VolDFmultiple}
For every   $e$-dimensional rational affine space $F\subseteq \R^n$
($e=0,\dots,n$),
letting  $d=d_F$ we have
$ \mathsf V_{d F}=d^{e+1}  \mathsf V_F$.
\end{lemma}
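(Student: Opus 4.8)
The plan is to reduce the claimed identity to the homogeneity of Hausdorff measure, after establishing that multiplication by $d=d_F$ carries a regular $e$-simplex spanning $F$ with vertices of denominator $d$ to a regular $e$-simplex spanning $dF$ with integer vertices. If $d=1$ there is nothing to prove, since then $dF=F$; so I assume $d\ge 2$. Then $0\notin F$ (because $\den(0)=1<d$), hence $0\notin dF$ and $e<n$, the only $n$-dimensional affine subspace of $\Rn$ being $\Rn$ itself.

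Next I would use Lemma~\ref{lemma:wlog-isobunch} to fix $v_0,\dots,v_e\in F$ with $\den(v_i)=d$ and $\conv(v_0,\dots,v_e)$ a regular $e$-simplex, so that $\mathsf V_F=\lambda_{e+1}(\mathcal P(v_0,\dots,v_e))$. Put $a^{(i)}=dv_i$; each $a^{(i)}$ is an integer vector, so $\den(a^{(i)})=1$, whence $d_{dF}=1$, $\widetilde{dv_i}=(a^{(i)},1)$ and $\widetilde{v_i}=(a^{(i)},d)$, and regularity of $\conv(v_0,\dots,v_e)$ amounts to saying that $\{(a^{(0)},d),\dots,(a^{(e)},d)\}$ extends to a basis of $\Zed^{n+1}$. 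The key step, which I expect to be the main obstacle, is to deduce that $\{(a^{(0)},1),\dots,(a^{(e)},1)\}$ also extends to a basis of $\Zed^{n+1}$; since these $e+1$ vectors are linearly independent (the $a^{(i)}=dv_i$ being affinely independent), this is equivalent to saying that the sublattice $\Lambda$ they generate is saturated (has torsion-free quotient in $\Zed^{n+1}$). I would prove this by comparing $\Lambda$ with the sublattice $\Lambda_0$ generated by $\{(a^{(i)},d)\}_i$. Writing $W=\sum_{i=1}^{e}\Zed(a^{(i)}-a^{(0)})\subseteq\Zn$, one has the internal direct sum decompositions $\Lambda_0=\Zed(a^{(0)},d)\oplus(W\times\{0\})$ and $\Lambda=\Zed(a^{(0)},1)\oplus(W\times\{0\})$, since $(a^{(0)},d)$ and $(a^{(0)},1)$ have nonzero last coordinate. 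Saturatedness of $\Lambda_0$ forces $W$ to be saturated in $\Zn$: if $\ell w\in W$ with $w\in\Zn$ and $\ell\ge 1$, then $\ell(w,0)\in\Lambda_0$, so $(w,0)\in\Lambda_0$, and comparing last coordinates gives $(w,0)\in W\times\{0\}$. In turn, saturatedness of $W$ forces $\Lambda$ to be saturated: if $mv\in\Lambda$ with $v=(v',v_{n+1})\in\Zed^{n+1}$ and $m\ge 1$, writing $mv=k(a^{(0)},1)+(w,0)$ yields $k=mv_{n+1}$ and $m(v'-v_{n+1}a^{(0)})=w\in W$, hence $v'-v_{n+1}a^{(0)}\in W$ and $v=v_{n+1}(a^{(0)},1)+(v'-v_{n+1}a^{(0)},0)\in\Lambda$. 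Consequently $\conv(dv_0,\dots,dv_e)$ is a regular $e$-simplex contained in $dF$ whose vertices have denominator $d_{dF}=1$, so by Definition~\ref{Def:VF} (whose legitimacy rests on Lemma~\ref{Lem:VF}) we may compute $\mathsf V_{dF}=\lambda_{e+1}(\mathcal P(dv_0,\dots,dv_e))$.

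Finally, since $\mathcal P(dv_0,\dots,dv_e)=d\,\mathcal P(v_0,\dots,v_e)$, it remains to show $\lambda_{e+1}(d\,\mathcal P)=d^{e+1}\lambda_{e+1}(\mathcal P)$ for $\mathcal P=\mathcal P(v_0,\dots,v_e)$. Both $\mathcal P$ and $d\,\mathcal P$ are $(e+1)$-dimensional rational polyhedra lying in the $(e+1)$-dimensional rational linear subspace $G\subseteq\Rn$ spanned by $F$, which is also the subspace spanned by $dF$, a linear subspace being invariant under $x\mapsto dx$. Applying Theorem~\ref{theorem:volumi}(vi) with $A=G$ and $m=e+1\le n$ gives a constant $\kappa_G>0$ with $\lambda_{e+1}(Q)=\kappa_G\,\mathcal H^{e+1}(Q)$ for every rational $(e+1)$-simplex $Q\subseteq G$; triangulating and using the valuation property Theorem~\ref{theorem:volumi}(ii), together with the fact that $\mathcal H^{e+1}$ is itself a valuation vanishing on polyhedra of dimension $<e+1$, this identity extends to all rational polyhedra contained in $G$. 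Since $\mathcal H^{e+1}(d\,R)=d^{e+1}\mathcal H^{e+1}(R)$, I obtain $\lambda_{e+1}(d\,\mathcal P)=\kappa_G\,\mathcal H^{e+1}(d\,\mathcal P)=\kappa_G\,d^{e+1}\mathcal H^{e+1}(\mathcal P)=d^{e+1}\lambda_{e+1}(\mathcal P)$, that is $\mathsf V_{dF}=d^{e+1}\mathsf V_F$. Apart from the saturation argument in the middle paragraph, everything here is routine bookkeeping with the properties of $\lambda_{e+1}$ already established.
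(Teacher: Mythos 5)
Your proof is correct and follows the paper's overall strategy: dispose of the trivial case, fix a regular $e$-simplex $\conv(v_0,\dots,v_e)\subseteq F$ with all vertices of denominator $d$, show that $\conv(dv_0,\dots,dv_e)$ is a regular $e$-simplex in $dF$ with integer vertices so that $\mathsf V_{dF}=\lambda_{e+1}\bigl(\mathcal P(dv_0,\dots,dv_e)\bigr)=\lambda_{e+1}\bigl(d\,\mathcal P(v_0,\dots,v_e)\bigr)$, and then conclude via Theorem~\ref{theorem:volumi}(vi) and the $(e{+}1)$-homogeneity of $\mathcal H^{e+1}$. (Splitting on $d=1$ versus $d\geq 2$, rather than on $0\in F$ versus $0\notin F$ as the paper does, lands you in the same main case.) The one genuine divergence is how you establish regularity of $\conv(dv_0,\dots,dv_e)$. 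The paper argues contrapositively: non-regularity would produce a rational point $v\in\conv(dv_0,\dots,dv_e)$ with $\den(v)\leq e$, whose preimage $\tfrac1d v\in\conv(v_0,\dots,v_e)$ has denominator $\leq de$, contradicting regularity of the original simplex; this leans on a denominator-theoretic criterion for regularity that the text takes as known without citation. You instead work directly with the lattice $\Zed^{n+1}$: writing both $\Lambda_0=\sum_i\Zed(dv_i,d)$ and $\Lambda=\sum_i\Zed(dv_i,1)$ as internal direct sums over the common piece $W\times\{0\}$, you transfer saturatedness from $\Lambda_0$ to $W$ and then from $W$ to $\Lambda$, and invoke the standard equivalence between ``saturated sublattice'' and ``extendable to a basis.'' Your route is more self-contained and elementary, at the cost of a longer chain of small verifications; the paper's is terser but rests on an unreferenced characterization of regularity via denominators. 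The final computation with the proportionality constant and Hausdorff measure is identical in both.
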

 \begin{proof}
Observe that $\mathsf V_{F}=0$ iff $0\in F$. If this is the case then the result follows trivially since $0\in d F$.
So assume $0\notin F$, whence  $e<n$.
In view of  Lemma~\ref{lemma:wlog-isobunch},  
let  $v_0,\dots,v_e$ be rational points of $F$ of denominator
$d$ such that the $e$-simplex  $\conv(v_0,\dots,v_e)$ is regular.
Let $G= d F$. Then
  $d v_0,\ldots, d v_e \in\Zn\cap G$.

We claim that $\conv(d v_0,\dots,d v_e)$ is regular.
For otherwise, there exists 
$v\in \conv(d v_0,\dots,d v_e)\cap \Q^n$ such that 
$\den(v)\leq e$. 
It follows that
 $w=\frac{1}{d}v\in F$,\ $w\in \conv(v_0,\dots,v_e)$ 
and $\den(w)\leq d \den(v)\leq d e$, 
thus contradicting the regularity of  $\conv(v_0,\dots,v_e)$.

Let 
$A$ be the  $(e+1)$-dimensional 
 rational linear subspace of $\Rn$  generated by $v_0,\ldots,v_e$.  
  Theorem~\ref{theorem:volumi}
 yields a constant $\kappa_A>0$,
 such that 
  $\,\,\,\lambda_{e+1}(Q) = \kappa_A\cdot \mathcal H^{e+1}(Q)$
  for every rational 
  $(e+1)$-simplex
   $Q\subseteq A$. Since both $\mathcal{P}(v_0,\ldots,v_e)$ and $\mathcal{P}(dv_0,\ldots,dv_e)$ are finite unions of $(e+1)$-simplexes 
  we conclude
 \begin{align*}
\mathsf{V}_{G}&=\lambda_{e+1}(\mathcal{P}(dv_0,\ldots,dv_e))=
\kappa_A\cdot \mathcal H^{e+1}(d\mathcal{P}(v_0,\ldots,v_e))\\
&=\kappa_Ad^{e+1}\cdot \mathcal H^{e+1}(\mathcal{P}(v_0,\ldots,v_e))=d^{e+1}\lambda_{e+1}(\mathcal{P}(v_0,\ldots,v_e))=d^{e+1}
\mathsf V_{F}. 
\qedhere
\end{align*}
\end{proof}

\begin{lemma}
\label{Lem:Filetto}
Every
$e$-dimensional rational affine space
$F\subseteq \R^n$\,\, {\rm (}$e=0,\dots,n${\rm )},
  contains a point $v\in F\cap\Qn$ 
such that 
$\,\,\den(v)=d_F\,\mbox{ and }\,
 d_F^e \mathsf V_F=||v||_{\Q}$.
\end{lemma}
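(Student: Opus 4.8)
The plan is to reduce to the linear case already handled in Lemma~\ref{Lem:VolDFmultiple} and then to exhibit the required point explicitly as a suitable vertex of a regular simplex. First I would dispose of the degenerate case $0\in F$: here $d_F=1$ and $\mathsf V_F=0$ (as noted in the proof of Lemma~\ref{Lem:VolDFmultiple}), so I may simply take $v=0$, for which $\den(v)=1=d_F$ and $d_F^e\mathsf V_F=0=||0||_\Q$. So from now on assume $0\notin F$, which forces $e<n$.

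Next, pick (via Lemma~\ref{lemma:wlog-isobunch}) rational points $v_0,\dots,v_e\in F$, all of denominator $d_F$, with $\conv(v_0,\dots,v_e)$ a regular $e$-simplex, so that by Definition~\ref{Def:VF} we have $\mathsf V_F=\lambda_{e+1}(\mathcal P(v_0,\dots,v_e))$. The idea is that the parallelotope $\mathcal P(v_0,\dots,v_e)$, together with the origin, spans an $(e+1)$-dimensional region whose $\lambda_{e+1}$-measure should be computable from a single edge once we move into a coordinate subspace. Concretely, I would apply Lemma~\ref{Lemma:GammaFromSimplex} (through the linear span $G$ of $F\cup\{0\}$, exactly as in the proof of Lemma~\ref{Lem:VF}) to get $\gamma\in\GLn$ carrying $G$ onto $\R\varepsilon_1+\dots+\R\varepsilon_{e+1}$; then $\gamma(v_i)=(v_i',0,\dots,0)$ with $v_i'\in\R^e$, the simplex $\conv(\gamma(v_0),\dots,\gamma(v_e))$ is still regular of denominator $d_F$, and by Theorem~\ref{theorem:volumi}(i),(iii) the quantities $\mathsf V_F$, $||v||_\Q$ and $\den$ are all preserved. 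Thus I may assume $F=\R\varepsilon_1+\dots+\R\varepsilon_e+c\,\varepsilon_{e+1}$ sits inside $\R^{e+1}\subseteq\R^n$ with all its rational points having last coordinate a fixed rational $c\ne 0$ of denominator $d_F$; in fact I may take $v_0=(0,\dots,0,c)$ with $c=1/d_F$ after a further coordinate adjustment inside $\R^e$.

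In this normal form the candidate point is forced on us. Consider the edge $w=d_F\,v_0=(0,\dots,0,1,0,\dots,0)=\varepsilon_{e+1}$ — wait, that has denominator $1$, not $d_F$; instead I take $v=v_0$ itself, the vertex of denominator $d_F$, and compute. On one hand, by Theorem~\ref{theorem:volumi}(vi) applied to the line $A=\R\varepsilon_{e+1}$ (where $\kappa_A=1$ since this is a coordinate axis, hence $d_A=1$ and $\lambda_1=\mathcal H^1$ there), $||v_0||_\Q=\lambda_1(\conv(0,v_0))=\mathcal H^1(\conv(0,v_0))=|c|=1/d_F$. On the other hand, $\mathcal P(v_0,\dots,v_e)$ is contained in $\R^{e+1}$, so by the valuation/Hausdorff identification in Theorem~\ref{theorem:volumi}(vi) (with $m=e+1=n$ after restricting to $\R^{e+1}$, giving $\kappa=1$) we get $\mathsf V_F=\mathcal H^{e+1}(\mathcal P(v_0,\dots,v_e))$. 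This last volume is a base-times-height: the base is the $e$-simplex-generated parallelotope in the hyperplane $\{x_{e+1}=c\}$, whose $e$-volume equals $e!\,\mathcal H^e(\conv(v_0,\dots,v_e))$, and by Theorem~\ref{theorem:volumi}(vi) the latter is $e!\cdot\frac{1}{e!\,d_F^{e+1}}=d_F^{-(e+1)}$ (the base parallelotope has $e$-volume $e!$ times the simplex volume... here I should just track constants carefully); the height is $|c|=1/d_F$ only if the $v_i$ were chosen with their $\R^e$-parts forming a unimodular parallelotope, which regularity plus denominator $d_F$ guarantees. Putting the two computations together yields $\mathsf V_F=\frac{1}{d_F^{\,e+1}}$ and $||v_0||_\Q=\frac{1}{d_F}$, so $d_F^e\mathsf V_F=\frac{1}{d_F}=||v_0||_\Q$, as required.

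The main obstacle is the bookkeeping in the last paragraph: pinning down the exact constant relating $\mathsf V_F=\lambda_{e+1}(\mathcal P(v_0,\dots,v_e))$ to $\mathcal H^{e+1}$ and then factoring that $(e+1)$-volume as (an $e$-dimensional lattice-parallelotope volume)$\times$(height $1/d_F$). I expect this is cleanest if, instead of arguing via Hausdorff measure directly, one invokes Lemma~\ref{Lem:VolDFmultiple}: $\mathsf V_{d_F F}=d_F^{e+1}\mathsf V_F$, where $d_F F$ is an integral affine space through integer points of denominator $1$, for which the analogous identity $d_{d_FF}^{\,e}\mathsf V_{d_FF}=||\,\cdot\,||_\Q$ degenerates to $\mathsf V_{d_FF}=||d_F v_0||_\Q$ because $d_{d_FF}=1$ and $d_Fv_0$ is a primitive integer vector spanning the regular $1$-simplex $\conv(0,d_Fv_0)$, so $||d_Fv_0||_\Q=\lambda_1(\conv(0,d_Fv_0))=1$ by Theorem~\ref{theorem:volumi}(v). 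Then $\mathsf V_F=d_F^{-(e+1)}$, and since $||v_0||_\Q=\lambda_1(\conv(0,v_0))$ scales by Lemma~\ref{Lem:VolDFmultiple}'s one-dimensional instance as $||v_0||_\Q=d_F^{-1}||d_Fv_0||_\Q=d_F^{-1}$, we conclude $d_F^e\mathsf V_F=d_F^{-1}=||v_0||_\Q$, and $v:=v_0$ works.
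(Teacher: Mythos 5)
The proposal breaks down at the normalization step and at the claimed value of $\mathsf V_F$. You assert that after applying $\gamma$ and ``a further coordinate adjustment inside $\R^e$'' one may assume $F$ is the hyperplane $\{x_{e+1}=1/d_F\}$ inside $\R^{e+1}$, and from this you deduce $\mathsf V_F=1/d_F^{\,e+1}$. Both claims are false: the lattice distance of $F$ from the origin is an invariant of the $\GLn$-orbit and cannot be normalized to $1/d_F$. For a concrete counterexample take $n=2$ and $F=\{(x_1,x_2)\colon x_1+x_2=2\}$, so $e=1$, $d_F=1$. The points $(2,0)$ and $(1,1)$ span a regular $1$-simplex in $F$, and $\mathcal{P}((2,0),(1,1))$ has area $2$, so $\mathsf V_F=2$, not $1/d_F^{\,e+1}=1$; correspondingly no $\gamma\in\GLn$ carries $F$ to $\{x_2=1\}$. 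Indeed, after reducing to $d_F=1$ and $\dim F=n-1$, the quantity $\mathsf V_F$ equals $\det(\mathcal U)$ for the integer matrix $\mathcal U$ with rows $v_0,\ldots,v_{n-1}$, and this determinant can be any positive integer. Consequently the vertex $v_0$ itself, whose $\lambda_1$-length is $1/d_F$, is generally the wrong point to choose.

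The ``cleaner'' variant at the end is moreover circular: you invoke ``the analogous identity $d_{d_FF}^{\,e}\mathsf V_{d_FF}=||\cdot||_\Q$'' --- which is precisely the lemma applied to $d_FF$ --- in order to obtain $\mathsf V_{d_FF}=1$, but that value need not be $1$ (it is $2$ in the example above), and the side claim that $d_Fv_0$ is always a primitive integer vector is also unjustified (take $v_0=(2,4)$ with $d_F=1$). The missing idea is that $v$ cannot be a vertex of the original regular simplex but must be built afresh and scaled. In the paper one extends $\{(0,\ldots,0,1),(v_1-v_0,1),\ldots,(v_{n-1}-v_0,1)\}$ to a basis of $\Zed^{n+1}$ by adjoining a vector $(w,1)$, uses regularity to get $||w||_\Q=1$, and proves by a determinant identity that $\mathsf V_F\cdot w$ lies in $F$; setting $v=\mathsf V_F\cdot w$ then gives $||v||_\Q=\mathsf V_F\,||w||_\Q=\mathsf V_F$ and $\den(v)=1=d_F$, after which the case $d_F\neq 1$ follows by the scaling in Lemma~\ref{Lem:VolDFmultiple}. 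Your proposal never reaches this construction.
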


\begin{proof}

If $ \mathsf V_F=0$ then $0\in F$ and $v=0$  will do.

If $\mathsf V_F\neq 0$
let us first assume  $d_F=1$.
By the same argument used in 
Lemma~\ref{Lem:VF}
 we can assume
 $\dim(F)=n-1$ without loss of generality.

Let $v_0,\ldots,v_{n-1}\in F\cap\Zn$ 
be such that $\conv(v_0,\ldots,v_{n-1})$ is  a   regular  $e$-simplex.
Assume that $v_0,\ldots,v_{n-1}$ are ordered  in such a way that
the determinant of the matrix $\mathcal U$ whose rows are 
$v_0,\ldots,v_{n-1}$ is   $>0$. 
Since $\mathsf V_F = \lambda_{n}(\mathcal{P}(v_0,\ldots,v_n))$, 
by Theorem~\ref{theorem:volumi}(vi) we can write
\[\mathsf V_F=\lambda_{n}(\mathcal{P}(v_0,\ldots,v_{n-1}))=
\mathcal H^{n}(\mathcal{P}(v_0,\ldots,v_{n-1}))=\det(\mathcal U).
\]
Since $\conv(v_0,\ldots,v_{n-1})$ is
regular, then so is $\conv(0,v_1-v_0,\ldots,v_{n-1}-v_0)$.
For each $i=1,\ldots, n-1$, ${v_i-v_0}\in\Zn$, 
and hence $\den(v_i-v_0)=1$. 
Therefore, there exists $(z_1,\ldots,z_{n+1})\in \Zed^{n+1}$ such that 
\[\{(0,\ldots,0,1),(z_1,\ldots,z_{n+1}),(v_1-v_0,1),\ldots, (v_{n-1}-v_0,1)\}\] is a basis of $\Zed^{n+1}$. Without loss of generality we
may assume that  $z_{n+1}=1=$ the value of the determinant of the matrix $\mathcal W$ 
whose rows are 
\[(0,\ldots,0,1),(z_1,\ldots,z_{n+1}),(v_1-v_0,1),
\ldots, (v_{n-1}-v_0,1).\]
Let $w=(z_1,\ldots,z_n)$. Since $\conv(0,w)$ is regular
then  $||w||_{\Q}=1$.

\medskip 
\noindent {\sf Claim:}  $\mathsf V_F \cdot w \in F$.

As a matter of fact: 
\begin{align*}
\det\left(\begin{tabular}{c}
$(\mathsf V_F \cdot w)-v_0$\\
$v_1-v_0$\\
\vdots\\
$v_{n-1}-v_0$
\end{tabular}\right)&=\mathsf V_F 
\det\left(\begin{tabular}{c}
$w$\\
$v_1-v_0$ \\
\vdots\\
$v_{n-1}-v_0$
\end{tabular}\right)-\det\left(\begin{tabular}{c}
$v_0$\\
$v_1-v_0$\\
\vdots\\
$v_{n-1}-v_0$
\end{tabular}\right)\\[0.4cm]
&=\mathsf V_F 
\det\left(\begin{tabular}{cl}
$0,\ldots,0$& 1\\
$w$& 1\\
$v_1-v_0$& 1 \\
\vdots&\vdots\\
$v_{n-1}-v_0$& 1
\end{tabular}\right)-\det(\mathcal U)\\[0.3cm]
&=\mathsf V_F 
\det(\mathcal W)-\det(\mathcal U) = 0.
\end{align*}
Therefore,
the point  $(\mathsf V_F \cdot w)-v_0$ lies
 in the $(n-1)$-dimensional 
subspace of $\R^n$ generated by 
$(v_1-v_0),\ldots, (v_{n-1}-v_0)$, that is, 
$\mathsf V_F \cdot w\in \aff(v_0,\ldots,v_{n-1}) = F$, 
which settles our claim.

Setting now $v=\mathsf V_F\cdot w$, we have
 $v\in F$ by our claim.  Another application of
  Theorem~\ref{theorem:volumi}(vi)  yields
\[
||v||_{\Q}=\lambda_{1}(\conv(0,\mathsf V_F\cdot w))=\mathsf V_F\lambda_{1}(\conv(0, w))=\mathsf V_F ||w||_{\Q}=\mathsf V_F.
\]

If $d_F\neq 1$, let $G=d_F F$. Observe that $d_G=1$.
Thus there exists $w\in G$ such that $||w||_{\Q}=\mathcal{V}_G$. Let $v\in F$ be such that $w=d_F v$. By  Lemma~\ref{Lem:VolDFmultiple} and  Theorem~\ref{theorem:volumi}(vi),
\[\mathcal ||v||_{\Q}=||w||_{\Q}/d_F=\mathsf V_{G}/d_F=d_F^{e}  \mathsf V_F.\qedhere\]

\end{proof}

We are now in a position to 
classify $\GLn$-orbits of rational
affine subspaces of $\Rn$.
\begin{theorem} 
\label{Theorem:ClassAffineSpace}
Let $F,G\subseteq \R^n$ be nonempty rational affine spaces. Then the
 following conditions are equivalent:
\begin{itemize}
\item[(i)] There exists $\gamma\in\GLn$ such that $\gamma(F)=G$;

\smallskip
\item[(ii)] $(\dim(F), \mathsf V_F)=(\dim(G), \mathsf V_G)$.
\end{itemize} 
\end{theorem}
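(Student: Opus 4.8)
The implication (i)$\Rightarrow$(ii) is the easy half: if $\gamma(F)=G$ for some $\gamma\in\GLn$, then clearly $\dim(F)=\dim(G)$, and invariance of $\mathsf V_F$ under $\GLn$ follows from Theorem~\ref{theorem:volumi}(i), since $\gamma$ maps a regular $e$-simplex $\conv(v_0,\ldots,v_e)\subseteq F$ with $\den(v_i)=d_F$ to a regular $e$-simplex $\conv(\gamma(v_0),\ldots,\gamma(v_e))\subseteq G$ whose vertices have denominator $d_F$ (by Lemma~\ref{Lemma:GammaFromSimplex}, $\den$ is $\GLn$-invariant), so $d_G=d_F$ as well, and $\gamma(\mathcal P(v_0,\ldots,v_e))=\mathcal P(\gamma(v_0),\ldots,\gamma(v_e))$ gives $\mathsf V_F=\lambda_{e+1}(\mathcal P(v_0,\ldots,v_e))=\lambda_{e+1}(\gamma(\mathcal P(v_0,\ldots,v_e)))=\mathsf V_G$.

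\textbf{The converse.} The plan for (ii)$\Rightarrow$(i) is to reduce both $F$ and $G$, via suitable elements of $\GLn$, to a common normal form determined only by $e=\dim(F)=\dim(G)$ and the common value $\mathsf V:=\mathsf V_F=\mathsf V_G$. First I would handle the degenerate case $\mathsf V=0$: by the observation in the proof of Lemma~\ref{Lem:VolDFmultiple}, $\mathsf V_F=0$ iff $0\in F$, so both $F$ and $G$ pass through the origin and are genuine $e$-dimensional rational \emph{linear} subspaces; then $d_F=d_G=1$ and Lemma~\ref{lemma:wlog-isobunch} together with Lemma~\ref{Lemma:GammaFromSimplex} (applied to regular $e$-simplices $\conv(0,v_1,\ldots,v_e)$ inside $F$ and $G$) produces $\gamma\in\GLn$ carrying $F$ onto $\R\varepsilon_1+\cdots+\R\varepsilon_e$, and likewise for $G$, so $F$ and $G$ lie in one $\GLn$-orbit.

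Now assume $\mathsf V\neq 0$, so $0\notin F$, $0\notin G$, and $e<n$. The key step is Lemma~\ref{Lem:Filetto}: it furnishes a point $v\in F$ with $\den(v)=d_F$ and $d_F^{\,e}\mathsf V_F=\|v\|_{\Q}$, and likewise a point $w\in G$ with $\den(w)=d_G$ and $d_G^{\,e}\mathsf V_G=\|w\|_{\Q}$. From (ii) I would first extract $d_F=d_G$: indeed $\mathsf V_F$ and $\mathsf V_G$ agree, and I claim $d_F$ is itself determined by $(\dim F,\mathsf V_F)$ — more robustly, I would simply note that the argument below only needs to build $\gamma$ sending a distinguished regular $e$-simplex of $F$ (all vertices of denominator $d_F$) to one of $G$, and to match denominators it suffices that $d_F=d_G$; this equality can be obtained by applying Lemma~\ref{lemma:wlog-isobunch} to choose in $F$ a regular $e$-simplex $\conv(v_0,\ldots,v_e)$ with all $\den(v_i)=d_F$, passing to $dF$ via Lemma~\ref{Lem:VolDFmultiple} to normalize the minimal denominator to $1$, and comparing. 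Concretely: let $d=d_F$ and consider $dF$; then $d_{dF}=1$ and $\mathsf V_{dF}=d^{e+1}\mathsf V_F$ by Lemma~\ref{Lem:VolDFmultiple}; similarly with $G$ replaced by $d_G G$. So it is enough to prove the theorem in the normalized case $d_F=d_G=1$ and then divide back; in that case $\mathsf V_F=\mathsf V_G$ already forces everything, and after rescaling one recovers $d_F=d_G$ from the common value $\mathsf V_{dF}=d^{e+1}\mathsf V_F$ and $\mathsf V_{d_GG}=d_G^{e+1}\mathsf V_G$ only if $d=d_G$ — so strictly I must also argue $d_F=d_G$ directly, which follows because $d_F=\min\{\den(x):x\in F\cap\Qn\}$ is the unique positive integer $d$ for which $d_{dF}=1$ and $dF$ is a primitive rational linear-ish object; I will phrase this cleanly as: $d_F$ is recovered from $\mathsf V_F$ via Lemma~\ref{Lem:Filetto}, since $\|v\|_{\Q}=d_F^{\,e}\mathsf V_F$ with $v\in F$ of denominator $d_F$ determines $d_F$ once one knows the orbit admits such a canonical point. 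Having $d_F=d_G$, apply Lemma~\ref{lemma:wlog-isobunch} to get regular $e$-simplices $\conv(v_0,\ldots,v_e)\subseteq F$ and $\conv(w_0,\ldots,w_e)\subseteq G$ with all denominators equal to $d_F=d_G$; by Lemma~\ref{Lem:VF} these choices are immaterial. The final move is to exhibit $\gamma\in\GLn$ with $\gamma(v_i)=w_i$ for all $i$; this is not directly Lemma~\ref{Lemma:GammaFromSimplex} because that lemma is stated for simplices with a vertex at $0$, so I would translate: choose the normal form of Lemma~\ref{Lem:Filetto} so that one of the vertices, say $v_0$, equals the canonical point $v$ with $\|v\|_{\Q}=d_F^{\,e}\mathsf V_F$, do likewise in $G$, and then reduce to a linear problem on $\aff(0,v_0,\ldots,v_e)$ — an $(e+1)$-dimensional rational subspace with $d=1$ — where Lemma~\ref{Lemma:GammaFromSimplex} applies to $\conv(0,v_0,\ldots,v_e)$ and $\conv(0,w_0,\ldots,w_e)$ once one checks the relevant denominators (namely $\den(v_0)=\cdots=\den(v_e)=d_F$ and, on the linear subspace, $\den(0)=1$) coincide on both sides; since $\mathsf V_F=\mathsf V_G$ is exactly the statement $\lambda_{e+1}(\mathcal P(v_0,\ldots,v_e))=\lambda_{e+1}(\mathcal P(w_0,\ldots,w_e))$, the extra hypothesis of Lemma~\ref{Lemma:GammaFromSimplex} is met, yielding $\gamma\in\GLn$ with $\gamma(v_i)=w_i$, hence $\gamma(F)=\gamma(\aff(v_0,\ldots,v_e))=\aff(w_0,\ldots,w_e)=G$.

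\textbf{Main obstacle.} The delicate point is showing that $d_F$ is recoverable from the pair $(\dim F,\mathsf V_F)$, equivalently that the hypothesis $\mathsf V_F=\mathsf V_G$ already implies $d_F=d_G$: without this, Lemma~\ref{Lemma:GammaFromSimplex} cannot be invoked, because it requires matching denominators vertex-by-vertex. I expect to handle this using Lemma~\ref{Lem:Filetto}, which pins down a canonical point $v\in F\cap\Qn$ with $\den(v)=d_F$ and $\|v\|_{\Q}=d_F^{\,e}\mathsf V_F$: the integer $d_F$ is then characterized intrinsically as the denominator of (a representative of) the $\GLn$-orbit point $v$, and since $\|v\|_{\Q}=\lambda_1(\conv(0,v))$ is itself $\GLn$-invariant, the orbit of $F$ determines $d_F$; conversely, combining this with the already-established (i)$\Rightarrow$(ii) one gets that $d_F$ is a function of the orbit, hence of $(\dim F,\mathsf V_F)$, so $\mathsf V_F=\mathsf V_G$ and $\dim F=\dim G$ do give $d_F=d_G$. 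Once this is secured, the rest is the routine reduction to Lemma~\ref{Lemma:GammaFromSimplex} sketched above, plus the trivial (i)$\Rightarrow$(ii) direction.
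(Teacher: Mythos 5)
Your overall plan is the paper's plan: reduce to codimension $1$, treat the case $\mathsf V=0$ via Lemma~\ref{lemma:wlog-isobunch} and Lemma~\ref{Lemma:GammaFromSimplex}, and in the case $\mathsf V\neq 0$ invoke Lemma~\ref{Lem:Filetto} to pin down a canonical rational point and then apply Lemma~\ref{Lemma:GammaFromSimplex}. However, two of the steps you flag as "delicate" are in fact not carried out correctly, and both need a genuine new idea.

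\textbf{The $d_F=d_G$ step is circular.} You argue: the orbit determines $d_F$ (true, since $\den$ is $\GLn$-invariant), and (i)$\Rightarrow$(ii) shows the orbit determines $(\dim F,\mathsf V_F)$; you then conclude $d_F$ is a function of $(\dim F,\mathsf V_F)$. That last inference is exactly the converse implication (ii)$\Rightarrow$(i), which is what you are trying to prove --- knowing that the orbit determines both $d_F$ and $(\dim F,\mathsf V_F)$ gives no functional relation between them. The paper closes this gap with a direct arithmetic argument (Claim~1 in the paper's proof): using the point $v$ from Lemma~\ref{Lem:Filetto}, one checks that $d_F^{e+1}\mathsf V_F=\|\den(v)\,v\|_\Q\in\Zed$, and that any $k\geq 1$ with $k^{e+1}\mathsf V_F\in\Zed$ forces $k(\tfrac{k}{d_F})^e v\in\Zn$, hence $d_F\mid k$; therefore $d_F=\min\{k\geq 1\mid k^{e+1}\mathsf V_F\in\Zed\}$, an expression depending only on $(e,\mathsf V_F)$. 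This is the missing idea.

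\textbf{The final reduction to Lemma~\ref{Lemma:GammaFromSimplex} does not work as stated.} You propose applying the lemma to $\conv(0,v_0,\ldots,v_e)$ with $v_0=v$ the canonical point of Lemma~\ref{Lem:Filetto}. But that $n$-simplex (after the reduction to $e=n-1$) is not regular in general: the determinant of the matrix of homogeneous correspondents of $0,v,v_1,\ldots,v_{n-1}$ equals $\pm\,d_F^{\,n}\,\mathsf V_F$, which need not be $\pm 1$. Also, the "extra hypothesis" of Lemma~\ref{Lemma:GammaFromSimplex} is \emph{regularity of the two $n$-simplexes together with vertex-by-vertex equality of denominators}, not equality of $\lambda_{e+1}$ of the parallelotopes; the equality $\mathsf V_F=\mathsf V_G$ does not substitute for the regularity check. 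The paper fixes both issues by introducing the rescaled integer vector $v'=v/\|v\|_\Q\in\Zn$ (so $\den(v')=1$), proving in Claim~2 that $\conv(0,v',v_1,\ldots,v_{n-1})$ \emph{is} regular --- the determinant computes to $d_F^{\,n-1}\mathsf V_F/\|v\|_\Q=1$, which is exactly where the identity $\|v\|_\Q=d_F^{\,n-1}\mathsf V_F$ from Lemma~\ref{Lem:Filetto} is used --- and then applying Lemma~\ref{Lemma:GammaFromSimplex} to get $\gamma$ with $\gamma(v')=w'$ and $\gamma(v_i)=w_i$; finally $\gamma(v)=\|v\|_\Q\gamma(v')=\|w\|_\Q w'=w$ by linearity, which gives $\gamma(F)=G$. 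Without the rescaling and the determinant computation, your argument does not go through.
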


\begin{proof}
The (i)$\Rightarrow$(ii) implication is trivial.\\
(ii)$\Rightarrow$(i)   
By the same argument used in Lemma~\ref{Lem:VF}
 we can assume without loss of generality that $\dim(F)=\dim(G)=n-1$.
We argue by cases:

\medskip
 
If $ \mathsf V_F=  \mathsf V_G=0$ then $0\in F\cap G$. 
By Lemma~\ref{lemma:wlog-isobunch}, there exist $v_1,\ldots,v_{n-1}\in F$ and  $w_1,\ldots,w_{n-1}\in G$ 
such that $\den(v_i)=\den(w_i)=1$ for each $i=1,\ldots,{n-1}$ 
and  both 
 $(n-1)$-simplexes $\conv(0,v_1,\ldots,v_{n-1})$ and $\conv(0,w_1,\ldots,w_{n-1})$ are regular. 
Therefore, there exist $v,w\in\Zn$ such that 
$\conv(0,v,v_1,\ldots,v_{n-1})$ and $\conv(0,v,w_1,\ldots,w_{n-1})$ 
are regular. 
An application of Lemma~\ref{Lemma:GammaFromSimplex} yields ${\gamma\in\GLn}$ such that $\gamma(v_i)=w_i$ 
for each $i=1,\ldots,n-1$. 
Thus 
\[
\gamma(F)= \gamma(\aff(0,v_1,\ldots,v_{n-1}))
=\aff(\gamma(0),\gamma(v_1),\ldots, \gamma(v_{n-1}))=G.
\]

\medskip

Now assume that $\mathsf V_F=\mathsf V_G\neq 0$.  Let $e=\dim(F)=\dim(G)$. 
By Lemma~\ref{Lem:Filetto}, there exist $v\in F$ and $w\in G$ such that 
\begin{equation}\label{eq:filotti}
||v||_{\Q}= d_F^{n-1}   \mathsf V_F\quad\mbox{ and }\quad||w||_{\Q}=d_G^{n-1}  \mathsf V_G .
\end{equation}

\noindent {\sf Claim 1:} $d_F=d_G$.

% Lemma~\ref{}, yields $v\in F$ such that $\den(v)=d_F$ and $||v||_{\Q}=d_F^{e}\mathsf V_F$. 
By \eqref{eq:filotti}, 
 $d_F^{e+1}\mathsf V_F=||d_F v||=||\den(v) v||_{\Q}\in\Zed$. 
Moreover, if  $k\in\{1,2,\ldots\}$
 is such that 
$k^{e+1}   \mathsf V_F\in\Zed$,
 then $k (\frac{k}{d_F})^{e}  ||v||_{\Q}\in \Zed$. 
Thus $k (\frac{k}{d_F})^{e}  v\in \Zn$, i.e., $k (\frac{k}{d_F})^{e} \in\Zed$ and $d_F$ is a divisor $k (\frac{k}{d_F})^{e}$. 
Therefore, $d_F$ is a divisor of $k$. 
We have proved that 
\[d_F=\min\{k\in\{1,2,\ldots,\}\mid k^{e+1} \mathsf V_F\in\Zed\}.\]
 Similarly  $d_G=\min\{k\in\{1,2,\ldots,\}\mid k^{e+1}   \mathsf V_G\in\Zed\}$. Since by hypothesis
  $\mathsf V_F=\mathsf V_G$, then $d_F=d_G$, with settles
  our claim.

\medskip

By \eqref{eq:filotti} and Claim 1,
$||v||_{\Q}= d_F^{n-1}   \mathsf V_F= d_G^{n-1}  \mathsf V_G =||w||_{\Q}$.  
By Lemma~\ref{lemma:wlog-isobunch}, there exist 
$v_1,\ldots,v_{n-1}\in F$ and  $w_1,\ldots,w_{n-1}\in G$ 
such that $\den(v_i)=\den(w_i)=d_F$ for each $i=1,\ldots,n-1$
 and both $(n-1)$-simplexes $\conv(v,v_1,\ldots,v_{n-1})$
 and $\conv(w,w_1,\ldots,w_{n-1})$ are regular.

Let us set $v'=v/(||v||_{\Q})$ and $w'=w/(||w||_{\Q})$.
By Theorem~\ref{theorem:volumi}(vi),\,\, $||v'||_{\Q}=||w'||_{\Q}=1$, whence $v',w'\in\Zn$.

\medskip

\noindent{\sf Claim 2:} $\conv(0,v',v_1,\ldots,v_{n-1})$
 and $\conv(0,w',w_1,\ldots,w_{n-1})$ are regular.
 
 As a matter of fact, again by  Theorem~\ref{theorem:volumi}(vi)  we can write

\begin{align*}
\det\left(\begin{tabular}{cl}
$0\ \ldots\ 0$&1\\
$v'$ & $1$\\
$d_F v_1$& $d_F$\\
\vdots&\vdots\\
$d_F v_{n-1}$&$d_F$
\end{tabular}
\right)&=
\det\left(\begin{tabular}{c}
$v'$ \\
$d_F v_1$\\
\vdots\\
$d_F v_{n-1}$
\end{tabular}
\right)
=d_F^{n-1}
\det\left(\begin{tabular}{c}
$v'$ \\
$v_1$\\
\vdots\\
$v_{n-1}$
\end{tabular}
\right)
\\
&=\frac{d_F^{n-1}}{||v||_{\Q}}
\det\left(\begin{tabular}{c}
$v$ \\
$v_1$\\
\vdots\\
$v_{n-1}$
\end{tabular}
\right)
=\frac{d_F^{n-1}\mathsf V_F}{||v||_{\Q}}  =1
\end{align*}
The same argument proves that $\conv(0,w',w_1,\ldots,w_{n-1})$ is regular, which settles our claim.
\medskip

Another application of  Lemma~\ref{Lemma:GammaFromSimplex}  yields $\gamma\in\GLn$ such that $\gamma(v')=w'$ and $\gamma(v_i)=w_i$ 
for each $i=1,\ldots,n-1$. 
Then \[\gamma(v)=\gamma(||v||_{\Q}v')=||v||_{\Q}\gamma(v')=||w||_{\Q} w'=w,\]
 and 
\[
\gamma(F)= \gamma(\aff(v,v_1,\ldots,v_{n-1}))
=\aff(\gamma(v),\gamma(v_1),\ldots, \gamma(v_{n-1}))=G.\qedhere
\]

\end{proof}


\begin{thebibliography}{99}

%\bibitem{ban}
%S. Banach, H. Steinhaus,
%Sur le principle de condensation de
%la condensation de singularit\'es,
%Fund. Math., 
%9:50-61, 1927.

%\bibitem{bar}
%A. Barvinok, {  A course in convexity},
%Graduate Studies in Mathematics, vol. 54,
%American Mathematical Society, Providence, Rhode Island, 2002.
%%

\bibitem{cabmun} 
L.M. Cabrer, D. Mundici,
 Classifying orbits of the affine group over the integers,
 Ergodic Theory Dynam. Systems, 
 %Cambridge University Press, 
 in press 2015,
doi:10.1017/etds.2015.45

%\commento{L to D: Minor corrections on this citation}



\bibitem{dan} 
J.S. Dani, 
Density properties of orbits under
discrete groups. 
{J. Indian Math. Soc.,}
39:189--218, 1975. 

%  \bibitem{dan-inventiones}
%S.G. Dani, 
%On invariant measures, minimal sets
%and a lemma of Margulis,  {  Inventiones Mathematicae,}   
%51:239--260, 1979.

  \bibitem{short}
S.G. Dani,  A. Nogueira,
On $\SLn_+$-orbits on  $\Rn$
and positive integral solutions of linear inequalities.
{J. of Number Theory,}   
%{  Journal of Number Theory,}   
129:2526--2529, 2009.

\bibitem{fed}
H. Federer, 
{Geometric measure theory.}
Springer, New York,  1969.

 \bibitem{gui}
A. Guilloux,
A brief remark on orbits of $\mathsf{SL}(2, \Zed)$  in the
Euclidean plane.
{Ergodic Theory Dynam. Systems,}  
30:1101--1109, 2010.
 

%
\bibitem{ewa}
G. Ewald,
{Combinatorial convexity and algebraic geometry}.
 Grad. Texts in Math., 
Vol.~168, 
Springer-Verlag, New York, 1996.
   
%%   
%%   \bibitem{harwri}
%%G.H. Hardy, E.M. Wright,
%%{  An introduction to the theory of numbers,}
%%Fifth Edition, Clarendon Press, Oxford,  1979.
%   
   
 \bibitem{launog}
M. Laurent, A. Nogueira,   
Approximation to points in the plane by  $\mathsf{SL}(2, \Zed)$-orbits.
{J. London Math. Soc.,} 
 85(2):409--429, 2012.

%
%%  \bibitem{lek}
%% { C.G. Lekkerkerker},  { 
%%  Geometry of numbers}, Wolters-Noordhoff,
%%  Groningen  and North-Holland, Amsterdam,
%%  1969.
%  
%%  \bibitem{mcm}
%%P. McMullen,
%%Valuations and dissections, In:
%%{  Handbook of Convex Geometry,}
%%Vol. 2.,
%%P.M.Gruber, J.M.Wills, Eds.,
%%Elsevier, 1993, pp.933--988.

 \bibitem{mor}
{R. Morelli},
 The birational geometry of toric varieties. 
{J. Algebraic Geom.,}
% { Journal of Algebraic Geometry,}
 5: 751--782, 1996.



\bibitem{mun-dcds}
D. Mundici,
 The Haar theorem for lattice-ordered
abelian groups with order-unit.
{Discrete Contin. Dyn. Syst.,}
21:537-549, 2008.

\bibitem{mun-cpc}
D. Mundici,
Invariant measure under the affine group
over $\Zed$. 
{Combin. Probab. Comput.,} 
 23:248--268, 2014.
%DOI  10.1017/S096354831300062X


%%\bibitem{Mun201X}
%%D. Mundici, 
%%Classifying $(\mathsf{GL}(2,\mathbb Z)\ltimes\mathbb Z^2)$-orbits by subgroups of $\R$, 
%%(preprint available at arXiv:1401.3708)
%

\bibitem{nog2002}
A. Nogueira,  
Orbit distribution on $\R^2$ under the natural action of 
${\rm SL}(2,\Zed)$.
{Indag. Math.} (N.S.),
 13:103--124, 2002.
 
%
% \bibitem{nog}
% {A. Nogueira},
%Relatively prime numbers and invariant
%measures under the natural action
%of SL$(n,\mathbb Z)$ on $\mathbb R^n$, 
%  {  Ergodic Theory and
% Dynamical Systems,}
%22:899--923,   2002.
%

\bibitem{nog2010}
A. Nogueira, 
Lattice orbit distribution on $\R^2$.
{Ergodic Theory Dynam. Systems,}  
%{  Ergodic Theory and Dynamical Systems}
30:1201--1214, 2010. Erratum,  ibid., p. 1215.




 \bibitem{oda}
T. Oda,
{Convex bodies and algebraic
geometry. An introduction to the theory of
toric varieties}.   Springer-Verlag, New York, 1988.

\bibitem{sta}
{J.R. Stallings,}
{Lectures on polyhedral topology.}
{Tata Inst. Fund. Res.,}
%Tata Institute of Fundamental Research,
Lectures in Mathematics, Vol. 43,
Mumbay, 1968.
 

 

\bibitem{wit}
E. Witten,
$\mathsf{SL}(2, \Zed)$ action on
three-dimensional conformal field
theories with abelian symmetry.
In: {  From Fields to Strings: Circumnavigating
Theoretical Physics},
Ian Kogan Memorial Collection (in 3 volumes),
M.~ Shifman et al., Eds., World Scientific,
Singapore, (2005), pp.
1173--1200.



\bibitem{wlo} 
{J. W{\l}odarczyk},  
Decompositions of birational toric maps 
in blow-ups and blow-downs.
{Trans. Amer. Math. Soc.,}
%{  Transactions of the American Mathematical
%Society,}
349:373--411, 1997.

\end{thebibliography}
\end{document}